\newenvironment{proof}{\emph{Proof.}}{\hfill $\square$ \\}
\begin{document}

\begin{frontmatter}

\title{Differential-Geometric Decomposition of\\Flat Nonlinear Discrete-Time Systems\thanksref{footnoteinfo}} 

\thanks[footnoteinfo]{The material in this paper was partially presented at the 10th IFAC
Symposium on Nonlinear Control Systems (NOLCOS 2016), August 23--25, 2016, Monterey, California, USA.}

\author{Bernd Kolar\corauthref{cor}}\ead{bernd\underline{\ }kolar@ifac-mail.org},    
\author{Markus Sch\"{o}berl}\ead{markus.schoeberl@jku.at},  
\author{Johannes Diwold}\ead{johannes.diwold@jku.at}              

\corauth[cor]{Corresponding author.}


\address{Institute of Automatic Control and Control Systems Technology, Johannes Kepler University, Linz, Austria}

\begin{keyword}                           
 Differential-geometric methods; Discrete-time systems; Nonlinear control systems; Feedback linearization; Difference flatness; Normal forms.               
\end{keyword}                             

\begin{abstract}                          
We prove that every flat nonlinear discrete-time system can be decomposed by coordinate transformations into a smaller-dimensional subsystem and an endogenous dynamic feedback. For flat continuous-time systems, no comparable result is available. The advantage of such a decomposition is that the complete system is flat if and only if the subsystem is flat.
Thus, by repeating the decomposition at most $n-1$ times, where $n$ is the dimension of the state space, the flatness of a discrete-time system can be checked in an algorithmic way. If the system is flat, then the algorithm yields a flat output which only depends on the state variables. Hence, every flat discrete-time system has a flat output which does not depend on the inputs and their forward-shifts. Again, no comparable result for flat continuous-time systems is available. The algorithm requires in each decomposition step the construction of state- and input transformations, which are obtained by straightening out certain vector fields or distributions with the flow-box theorem or the Frobenius theorem. Thus, from a computational point of view, only the calculation of flows and the solution of algebraic equations is needed. We illustrate our results by two examples.
\end{abstract}

\end{frontmatter}

\section{Introduction}

The concept of flatness has been introduced by Fliess, L{\'e}vine,
Martin and Rouchon in the 1990s for nonlinear continuous-time systems
(see e.g. \cite{FliessLevineMartinRouchon:1992}, \cite{FliessLevineMartinRouchon:1995},
and \cite{FliessLevineMartinRouchon:1999}). Flat continuous-time
systems have the characteristic feature that all system variables
can be expressed by a flat output and its time derivatives. They form
an extension of the class of static feedback linearizable systems,
and can be linearized by an endogenous dynamic feedback. The reason
for the ongoing popularity of flat systems lies in the fact that the
knowledge of a flat output allows an elegant systematic solution to
motion planning problems as well as the design of tracking controllers.
However, in contrast to the static feedback linearization problem,
which has been solved in \cite{JakubczykRespondek:1980} and \cite{HuntSu:1981},
there still exist no efficiently verifiable necessary and sufficient
conditions for flatness, and the construction of flat outputs is a
challenging problem.

For nonlinear discrete-time systems, flatness can be defined analogously
to the continuous-time case. The main difference is that time derivatives
are replaced by forward-shifts. To distinguish both concepts, often
the terms differential flatness and difference flatness are used (see
e.g. \cite{Sira-RamirezAgrawal:2004}). Like in the continuous-time
case, flat discrete-time systems form an extension of the class of
static feedback linearizable systems, and can be linearized by an
endogenous dynamic feedback (see e.g. \cite{KaldmaeKotta:2013}).
The static feedback linearization problem for discrete-time systems
has already been studied and solved in several papers using different
mathematical frameworks, see \cite{Grizzle:1986}, \cite{Jakubczyk:1987},
and \cite{Aranda-BricaireKottaMoog:1996}. There exist verifiable
necessary and sufficient conditions, which give rise to an algorithm
for the calculation of a linearizing output. The more general dynamic
feedback linearization problem, which includes flatness as a special
case, has been studied for discrete-time systems e.g. in \cite{Aranda-BricaireKottaMoog:1996}
and \cite{Aranda-BricaireMoog:2008}. In particular,
\cite{Aranda-BricaireMoog:2008} addresses the difference between
linearization by endogenous and exogenous dynamic feedback for discrete-time
systems. However, like in the continuous-time case, no efficiently
verifiable necessary and sufficient conditions are available. Thus,
the construction of flat outputs is also a difficult problem.

In practical applications, flat outputs often have some physical meaning,
see e.g. \cite{FliessLevineMartinRouchon:1999}. Therefore, the construction
of flat outputs is -- like the construction of Lyapunov functions
-- often based on physical considerations. A possible more systematic
approach is to transform the system into a decomposed form, where
the complete system is flat if and only if a smaller-dimensional subsystem
is flat. Repeating this decomposition with the subsystem may then
lead after several steps to a flat output. Such methods have been
developed with different types of decompositions for continuous-time
systems in \cite{SchlacherSchoberl:2007}, \cite{SchlacherSchoberl:2013},
\cite{Schoberl:2014}, and \cite{SchoberlSchlacher:2014}, and they
were transferred to discrete-time systems in \cite{KolarKaldmaeSchoberlKottaSchlacher:2016}
and \cite{KolarSchoberlSchlacher:2016-2} (see also \cite{Kolar:2017}).
The fundamental question is, however, under which conditions such
decompositions exist, and whether every flat system allows a decomposition
or not. For continuous-time systems, this question is a very difficult
one. For discrete-time systems, in contrast, the situation is completely
different. We present a simple geometric proof that a flat discrete-time
system can always be transformed by state- and input transformations
into a subsystem and an endogenous dynamic feedback. This type of
decomposition has been studied in \cite{KolarKaldmaeSchoberlKottaSchlacher:2016}
both in a differential-geometric and an algebraic framework, but without
a proof that for flat systems the decomposition is always possible.
In the present paper, we focus on the geometric framework. For a further
discussion in the algebraic framework, see \cite{Kaldmae:2016}. The
advantage of the geometric approach is that the decompositions can
be constructed systematically in special coordinates, and that the
proof for the existence of a decomposition of flat systems becomes
particularly simple. As a consequence of the latter result, the flatness
of discrete-time systems can be checked in an algorithmic way. If
the system is flat, then a repeated decomposition will yield a flat
output after at most $n-1$ steps, where $n$ denotes the dimension
of the state space. Since the constructed flat output
only depends on the state variables, we obtain the additional result
that every flat discrete-time system has a flat output which is independent
of the inputs and their forward-shifts.

The paper is organized as follows: In Section \ref{sec:DiscreteTimeSystems_and_Flatness}
we recall the definition of difference flatness and give an overview
of some important properties of flat discrete-time systems. In Section
\ref{sec:Decomposition} we discuss the decomposition of discrete-time
systems into a subsystem and an endogenous dynamic feedback by means
of coordinate transformations. We give geometric conditions for the
existence of such a decomposition, and show that for flat systems
these conditions are always satisfied. In Section \ref{sec:Algorithm}
we present an algorithm for the calculation of flat outputs, which
is based on a repeated application of the decomposition of Section
\ref{sec:Decomposition}. Furthermore, we show that
every flat discrete-time system has a flat output which only
depends on the state variables. We illustrate our results by two
examples in Section \ref{sec:Examples}.

\section{\label{sec:DiscreteTimeSystems_and_Flatness}Discrete-Time Systems
and Flatness}

In this contribution we consider discrete-time systems
\begin{equation}
x^{i,+}=f^{i}(x,u)\,,\quad i=1,\ldots,n\label{eq:sys}
\end{equation}
in state representation with $\dim(x)=n$, $\dim(u)=m$, and smooth
functions $f^{i}(x,u)$. Geometrically, such a system can be interpreted
as a map $f$ from a manifold $\mathcal{X}\times\mathcal{U}$ with
coordinates $(x,u)$ to a manifold $\mathcal{X}\text{\textsuperscript{+}}$
with coordinates $x^{+}$. We assume throughout the paper that the
system meets
\[
\mathrm{rank}(\partial_{(x,u)}f)=n\,,
\]
which means that the map $f$ is a submersion and therefore locally
surjective. Since this assumption is necessary for accessibility (see
e.g. \cite{Grizzle:1993}) and consequently also for flatness, it
is no restriction. To achieve the desired decompositions, we will
use state- and input transformations
\begin{equation}
\begin{array}{ccll}
\bar{x}^{i} & = & \Phi_{x}^{i}(x)\,,\quad & i=1,\ldots,n\\
\bar{u}^{j} & = & \Phi_{u}^{j}(x,u)\,, & j=1,\ldots,m\,,
\end{array}\label{eq:state_and_input_trans}
\end{equation}
and it should be noted that the variables $x^{+}$ are transformed
of course in the same way as the variables $x$. The transformed system
is given by
\[
\bar{x}^{i,+}=\underbrace{\Phi_{x}^{i}(x^{+})\circ f(x,u)\circ\hat{\Phi}(\bar{x},\bar{u})}_{\bar{f}(\bar{x},\bar{u})}\,,\quad i=1,\ldots,n\,,
\]
with the inverse $(x,u)=\hat{\Phi}(\bar{x},\bar{u})$ of (\ref{eq:state_and_input_trans}).
The superscript $+$ is only used to denote the forward-shift of the
state variables $x$. For the inputs and flat outputs we also need
higher forward-shifts, and use a subscript in brackets instead. For
instance, $u_{[\alpha]}$ denotes the $\alpha$-th forward-shift of
$u$. To keep formulas short and readable, we also use the Einstein
summation convention. Furthermore, we want to emphasize that all our
results are local. This is due to the use of the inverse- and the
implicit function theorem, the flow-box theorem, and the Frobenius
theorem, which allow only local results. We also assume that all functions
are smooth in order to avoid mathematical subtleties.

In the following, we summarize the concept of difference flatness,
which is the discrete-time counterpart of differential flatness for
continuous-time systems. Roughly speaking, the main difference is
that time derivatives are replaced by forward-shifts. Since many results
can be shown in a similar way to the continuous-time case, we omit
detailed proofs. Analogously to the static feedback
linearization problem for discrete-time systems, we define flatness
around an equilibrium
\begin{equation}
x_{0}^{i}=f^{i}(x_{0},u_{0})\,,\quad i=1,\ldots,n\label{eq:equilibrium}
\end{equation}
of the system (\ref{eq:sys}). The reason is that even in one time
step the state of a discrete-time system can move far away from the
initial state, regardless of the input values. Thus, in order not
to loose localness, we consider a suitable neighborhood of an equilibrium.
To introduce the concept of difference flatness, we need a space with
coordinates $(x,u,u_{[1]},u_{[2]},\ldots)$. On this space we have
the forward-shift operator $\delta_{xu}$, which acts on a function
$g$ according to the rule
\[
\delta_{xu}(g(x,u,u_{[1]},u_{[2]},\ldots))=g(f(x,u),u_{[1]},u_{[2]},u_{[3]},\ldots)\,.
\]
A repeated application of $\delta_{xu}$ is denoted by $\delta_{xu}^{\alpha}$.
In this framework, an equilibrium (\ref{eq:equilibrium})
corresponds to a point $(x_{0},u_{0},u_{0},u_{0},\ldots)$, and flatness
of discrete-time systems can be defined as follows.
\begin{defn}
The system (\ref{eq:sys}) is said to be flat around
an equilibrium $(x_{0},u_{0})$, if the $n+m$ coordinate functions
$x$ and $u$ can be expressed locally by an $m$-tuple
of functions
\begin{equation}
y^{j}=\varphi^{j}(x,u,u_{[1]},\ldots,u_{[q]})\,,\quad j=1,\ldots,m\label{eq:flat_output}
\end{equation}
and their forward-shifts
\[
\begin{array}{ccl}
y_{[1]} & = & \delta_{xu}(\varphi(x,u,u_{[1]},\ldots,u_{[q]}))\\
y_{[2]} & = & \delta_{xu}^{2}(\varphi(x,u,u_{[1]},\ldots,u_{[q]}))\\
 & \vdots
\end{array}
\]
up to some finite order. The $m$-tuple (\ref{eq:flat_output}) is
called a flat output.
\end{defn}

If (\ref{eq:flat_output}) is a flat output, then the $m(\beta+1)$
functions $\varphi,\delta_{xu}(\varphi),\delta_{xu}^{2}(\varphi),\ldots,\delta_{xu}^{\beta}(\varphi)$
are functionally independent for arbitrary $\beta\geq0$.\footnote{We only sketch the proof of this statement: If $u$ can be expressed
by the flat output and its forward-shifts, then this is also possible
for all forward-shifts $u_{[\alpha]}$ of $u$. By using the facts
that the coordinate functions $u,u_{[1]},u_{[2]},\ldots$ are functionally
independent and $\dim(y)=\dim(u)=m$, it can be shown that the functions
$\varphi,\delta_{xu}(\varphi),\delta_{xu}^{2}(\varphi),\ldots,\delta_{xu}^{\beta}(\varphi)$
must also be functionally independent.} Therefore, the representation of $x$ and $u$ by the flat output
and its forward-shifts is unique, and it has the form
\begin{equation}
\begin{array}{cclcl}
x^{i} & = & F_{x}^{i}(y_{[0,R-1]})\,, & \quad & i=1,\ldots,n\\
u^{j} & = & F_{u}^{j}(y_{[0,R]})\,, & \quad & j=1,\ldots,m\,.
\end{array}\label{eq:flat_parametrization}
\end{equation}
The multi-index $R=(r_{1},\ldots,r_{m})$ contains the number of forward-shifts
of each component of the flat output which is needed to express $x$
and $u$, and $y_{[0,R]}$ is an abbreviation for $y$ and its forward-shifts
up to order $R$. Written in components,
\[
y_{[0,R]}=(y_{[0,r_{1}]}^{1},\ldots,y_{[0,r_{m}]}^{m})
\]
with
\[
y_{[0,r_{j}]}^{j}=(y^{j},y_{[1]}^{j},\ldots,y_{[r_{j}]}^{j})\,,\quad j=1,\ldots,m\,.
\]
With the forward-shift operator $\delta_{y}$ in coordinates $(y,y_{[1]},y_{[2]},\ldots)$,
which acts on a function $h$ according to the rule
\begin{equation}
\delta_{y}(h(y,y_{[1]},y_{[2]},\ldots))=h(y_{[1]},y_{[2]},y_{[3]},\ldots)\,,\label{eq:delta_y}
\end{equation}
the parametrization of arbitrary forward-shifts $u_{[\alpha]}$ of
$u$ follows from (\ref{eq:flat_parametrization}) as
\[
u_{[\alpha]}^{j}=\delta_{y}^{\alpha}(F_{u}^{j}(y_{[0,R]}))\,,\quad j=1,\ldots,m.
\]
It is a well-known fact that the parametrization $F_{x}$ of the state
only depends on $y_{[0,R-1]}$, and that the highest forward-shifts
$y_{[R]}=(y_{[r_{1}]}^{1},\ldots,y_{[r_{m}]}^{m})$ that are required
in (\ref{eq:flat_parametrization}) only appear in the parametrization
$F_{u}$ of the input. It is also not hard to show that the map $(x,u)=F(y_{[0,R]})$
given by (\ref{eq:flat_parametrization}) is a submersion, i.e., that
the rows of its Jacobian matrix are linearly independent. Likewise,
the map
\begin{equation}
\begin{array}{ccl}
y & = & \varphi(x,u,u_{[1]},\ldots,u_{[q]})\\
y_{[1]} & = & \delta_{xu}(\varphi(x,u,u_{[1]},\ldots,u_{[q]}))\\
y_{[2]} & = & \delta_{xu}^{2}(\varphi(x,u,u_{[1]},\ldots,u_{[q]}))\\
 & \vdots\\
y_{[R]} & = & \delta_{xu}^{R}(\varphi(x,u,u_{[1]},\ldots,u_{[q]}))
\end{array}\label{eq:flat_parametrization_inverse}
\end{equation}
is also a submersion. This is a simple consequence of the already
mentioned functional independence of the flat output and its forward-shifts.
If the system (\ref{eq:sys}) is static feedback linearizable and
$y=\varphi(x)$ is a linearizing output, then the submersion (\ref{eq:flat_parametrization})
becomes a diffeomorphism, and its inverse is given by (\ref{eq:flat_parametrization_inverse}).
In this case, the parametrization (\ref{eq:flat_parametrization})
can be used as a coordinate transformation which transforms the system
(\ref{eq:sys}) into the discrete-time Brunovsky normal form.

If we substitute the parametrization (\ref{eq:flat_parametrization})
into the identity
\[
\delta_{xu}(x^{i})=f^{i}(x,u)\,,\quad i=1,\ldots,n\,,
\]
we get the important identity
\begin{equation}
\delta_{y}(F_{x}^{i}(y_{[0,R-1]}))=f^{i}\circ F(y_{[0,R]})\,,\quad i=1,\ldots,n\,.\label{eq:sys_identity_y}
\end{equation}
Because of (\ref{eq:sys_identity_y}), it is obvious that $F_{x}$
can indeed only depend on $y_{[0,R-1]}$. Otherwise, $\delta_{y}(F_{x})$
would depend on forward-shifts of $y$ that are not contained in $y_{[0,R]}$.
A further fundamental consequence of the identity (\ref{eq:sys_identity_y})
and the special form of the forward-shift operator (\ref{eq:delta_y})
is that the system equations (\ref{eq:sys}) do not impose any restrictions
on the feasible trajectories
\begin{equation}
y^{j}(k)\,,\quad j=1,\ldots,m\label{eq:trajectory_y}
\end{equation}
of the flat output (\ref{eq:flat_output}). That is, for every trajectory
(\ref{eq:trajectory_y}) of the flat output there exists a uniquely
determined solution $(x(k),u(k))$ of the system (\ref{eq:sys}) such
that the equations
\[
y^{j}(k)=\varphi^{j}(x(k),u(k),u(k+1),\ldots,u(k+q))\,,
\]
$j=1,\ldots,m$ are satisfied identically. The trajectories $x(k)$
and $u(k)$ of state and input are determined by $y(k)$ and its forward-shifts
via the parametrization (\ref{eq:flat_parametrization}). Thus, just
like in the case of differentially flat continuous-time systems, there
is a one-to-one correspondence between solutions of the system (\ref{eq:sys})
and arbitrary trajectories of the flat output.

\section{\label{sec:Decomposition}Decomposition of Flat Systems}

In this section we deal with a transformation of the system (\ref{eq:sys})
into a certain decomposed form, which can be interpreted as a splitting
into a subsystem and an endogenous dynamic feedback. This decomposed
form has the property that the complete system is flat if and only
if the subsystem is flat.
\begin{lem}
\label{lem:basic_decomposition_flat}A system of the form
\begin{equation}
\begin{array}{lcl}
x_{1}^{i_{1},+}=f_{1}^{i_{1}}(x_{1},x_{2},u_{1})\,, & \quad & i_{1}=1,\ldots,n-m_{2}\\
x_{2}^{i_{2},+}=f_{2}^{i_{2}}(x_{1},x_{2},u_{1},u_{2})\,, &  & i_{2}=1,\ldots,m_{2}
\end{array}\label{eq:basic_decomposition_flat}
\end{equation}
with $\dim(u_{2})=\dim(x_{2})=m_{2}$ and $\mathrm{rank}(\partial_{u}f)=\dim(u)=m$
is flat if and only if the subsystem
\begin{equation}
x_{1}^{+}=f_{1}(x_{1},x_{2},u_{1})\label{eq:basic_decomposition_flat_subsys}
\end{equation}
with the $m$ inputs $(x_{2},u_{1})$ is flat.
\end{lem}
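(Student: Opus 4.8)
The structure of the system: we have $n$ states split as $(x_1, x_2)$ with $\dim(x_2) = \dim(u_2) = m_2$, and $\dim(u_1) = m - m_2 = m_1$. The key structural feature is that $x_2^+ = f_2(x_1, x_2, u_1, u_2)$ and this map from $u_2$ (given $x_1, x_2, u_1$ fixed) is a submersion onto $x_2^+$... actually $\mathrm{rank}(\partial_u f) = m$. Since $f_1$ doesn't depend on $u_2$, the $m_2$ columns of $\partial_{u_2} f$ live entirely in the $x_2^+$ block, and since total rank is $m$, we must have $\mathrm{rank}(\partial_{u_2} f_2) = m_2$. So $u_2 \mapsto x_2^+$ is a local diffeomorphism (for fixed $x_1, x_2, u_1$).

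The idea: the subsystem (\ref{eq:basic_decomposition_flat_subsys}) treats $(x_2, u_1)$ as inputs. The "endogenous dynamic feedback" is: $x_2$ acts as a dynamic extension — it has its own dynamics $x_2^+ = f_2(\ldots, u_2)$ driven by the genuine input $u_2$, and since $u_2 \mapsto x_2^+$ is invertible, we can solve $u_2 = g(x_1, x_2, u_1, x_2^+)$. The claim is that prepending this "integrator-like" block to the subsystem doesn't change flatness.

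**Direction 1: subsystem flat $\Rightarrow$ full system flat.**

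Suppose (\ref{eq:basic_decomposition_flat_subsys}) is flat with flat output $z = \psi(x_1, (x_2,u_1), (x_2,u_1)_{[1]}, \ldots)$, where I'm writing the inputs of the subsystem as $w = (x_2, u_1)$. Then $x_1$, and the "inputs" $w = (x_2, u_1)$, are parametrized by $z$ and finitely many forward-shifts: $x_1 = P_{x_1}(z_{[0,\cdot]})$, $x_2 = P_{x_2}(z_{[0,\cdot]})$, $u_1 = P_{u_1}(z_{[0,\cdot]})$. I claim $z$ is also a flat output for the full system. I still need to recover $u_2$. From the identity for the subsystem, forward-shifting gives me $x_2^+$ as a function of $z$ and its shifts (namely $x_2^+ = \delta(P_{x_2})$). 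Then $u_2 = g(x_1, x_2, u_1, x_2^+)$ via the invertibility above — all of $x_1, x_2, u_1, x_2^+$ are already functions of $z$ and its shifts, so $u_2$ is too. Hence all of $(x_1, x_2, u_1, u_2) = (x, u)$ is parametrized by $z$: the full system is flat. I should double check that the number of independent shifts used stays finite and that $\dim z = m$ (it does, since the subsystem has $m$ inputs $w$, so its flat output has $m$ components).

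**Direction 2: full system flat $\Rightarrow$ subsystem flat.**

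This is the direction I expect to be more delicate. Suppose the full system (\ref{eq:basic_decomposition_flat}) is flat with flat output $y = \varphi(x, u, u_{[1]}, \ldots)$, and parametrization
\[
x = F_x(y_{[0,R-1]}), \qquad u = F_u(y_{[0,R]}).
\]
I want to show $y$ (or some modification of it) is a flat output for the subsystem (\ref{eq:basic_decomposition_flat_subsys}), i.e. that $x_1$ and the subsystem-inputs $w = (x_2, u_1)$ can be expressed by $y$ and its shifts. From the parametrization, $x_1 = F_{x_1}(y_{[0,R-1]})$, $x_2 = F_{x_2}(y_{[0,R-1]})$, and $u_1 = F_{u_1}(y_{[0,R]})$ are already in hand — so the *parametrizing* direction for the subsystem is immediate! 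The only thing I must verify is that $y$ is genuinely expressible *from* the subsystem variables, i.e. that $y$ is a function of $x_1, x_2, u_1$ and their forward-shifts $(x_2)_{[\cdot]}, (u_1)_{[\cdot]}$ along subsystem trajectories — and crucially, that one does not need $u_2$ to express $y$.

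**The main obstacle** is precisely this: $\varphi$ a priori depends on $u = (u_1, u_2)$ and its shifts, so $y$ might depend on $u_2$. I must show that along solutions, $u_2$ can be eliminated in favor of subsystem data. The trick: in the subsystem, $x_2^+$ is a genuine input-shift $(x_2)_{[1]}$. Using the full-system equation $x_2^+ = f_2(x_1, x_2, u_1, u_2)$ and the invertibility $u_2 = g(x_1, x_2, u_1, x_2^+)$, every occurrence of $u_2$ in $\varphi$ (and of $u_{2,[\alpha]}$ in its shifts $\delta^\alpha_{xu}\varphi$) can be rewritten using $g$ in terms of $x_1, x_2, u_1$, their forward-shifts, and forward-shifts of $x_2$ — all of which are legitimate subsystem variables. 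So $\varphi$ composed with this substitution expresses $y$ in terms of the subsystem state $x_1$, subsystem inputs $(x_2, u_1)$, and their forward-shifts. Combined with the parametrization $x_1 = F_{x_1}(y_{[0,R-1]})$, $(x_2,u_1)$ expressible by $y$-shifts, this shows $y$ is a flat output of the subsystem. One should be careful that the substitution is consistent with $\delta_{xu}$ versus the subsystem's own shift operator, and that functional independence of the $m$ components is preserved — this is where one invokes the submersion/functional-independence facts recalled before the lemma. Finally, one may prefer to work in suitable coordinates (straightening the $u_2 \to x_2^+$ map, e.g. choosing $\bar u_2 = f_2$, which is legitimate since $\partial_{u_2} f_2$ has full rank) to make the elimination of $u_2$ completely transparent.
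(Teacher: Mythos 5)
Your proposal is correct and follows essentially the same route as the paper: in one direction you recover $u_2$ from $(x_1,x_2,u_1,x_2^+)$ via the regularity of $\partial_{u_2}f_2$ and the implicit function theorem, and in the other you eliminate $u_2$ (equivalently, normalize $\hat{u}_2=f_2$ and substitute $\hat{u}_{2,[\alpha]}=x_{2,[\alpha+1]}$), which is exactly the paper's argument. No gaps.
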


\begin{proof}
\emph{\ }\\
\emph{Flatness of (\ref{eq:basic_decomposition_flat_subsys}) $\Rightarrow$
Flatness of (\ref{eq:basic_decomposition_flat}):} If $y$ is a flat
output of the subsystem (\ref{eq:basic_decomposition_flat_subsys}),
then the system variables $x_{1}$, $x_{2}$, and $u_{1}$ of this
subsystem can be expressed as functions of $y$ and its forward-shifts.
Because of the regularity of the Jacobian matrix $\partial_{u_{2}}f_{2}$,
which is an immediate consequence of $\mathrm{rank}(\partial_{u}f)=\dim(u)$
and the structure of (\ref{eq:basic_decomposition_flat}), the implicit
function theorem allows to express $u_{2}$ as function of $x_{1}$,
$x_{2}$, $u_{1}$, and $x_{2}^{+}$. Consequently, $u_{2}$ can also
be expressed as a function of $y$ and its forward-shifts, and $y$
is a flat output of the complete system (\ref{eq:basic_decomposition_flat}).\emph{}\\
\emph{Flatness of (\ref{eq:basic_decomposition_flat}) $\Rightarrow$
Flatness of (\ref{eq:basic_decomposition_flat_subsys}):} Because
of the regularity of $\partial_{u_{2}}f_{2}$, we can perform an input
transformation
\[
\hat{u}_{2}^{j_{2}}=f_{2}^{j_{2}}(x_{1},x_{2},u_{1},u_{2})\,,\quad j_{2}=1,\ldots,m_{2}
\]
such that (\ref{eq:basic_decomposition_flat}) takes the simpler form
\begin{equation}
\begin{array}{lcl}
x_{1}^{i_{1},+}=f_{1}^{i_{1}}(x_{1},x_{2},u_{1})\,, & \quad & i_{1}=1,\ldots,n-m_{2}\\
x_{2}^{i_{2},+}=\hat{u}_{2}^{i_{2}}\,, &  & i_{2}=1,\ldots,m_{2}\,.
\end{array}\label{eq:basic_decomposition_flat_normed}
\end{equation}
If
\[
y=\varphi(x_{1},x_{2},u_{1},\hat{u}_{2},u_{1,[1]},\hat{u}_{2,[1]},\ldots,u_{1,[q]},\hat{u}_{2,[q]})
\]
is a flat output of (\ref{eq:basic_decomposition_flat_normed}), then
by substituting $\hat{u}_{2}^{j_{2}}=x_{2,[1]}^{j_{2}}$ and $\hat{u}_{2,[\alpha]}^{j_{2}}=x_{2,[\alpha+1]}^{j_{2}}\,,\,\alpha\geq1$
we immediately get a flat output of the subsystem (\ref{eq:basic_decomposition_flat_subsys}).
\end{proof}
Note that the Jacobian matrix $\partial_{(x_{2},u_{1})}f_{1}$
does not necessarily have rank $m$. Thus, the subsystem (\ref{eq:basic_decomposition_flat_subsys})
may have redundant inputs. In this case, a flat output of the subsystem
(\ref{eq:basic_decomposition_flat_subsys}) contains components of
$x_{2}$.
\begin{rem}
\label{rem:redundant_inputs_state_variables}The
structure of (\ref{eq:basic_decomposition_flat}) and
\[
\mathrm{rank}\left(\left[\begin{array}{cc}
\partial_{u_{1}}f_{1} & 0\\
\partial_{u_{1}}f_{2} & \partial_{u_{2}}f_{2}
\end{array}\right]\right)=\mathrm{rank}(\partial_{u}f)=m
\]
imply $\mathrm{rank}(\partial_{u_{1}}f_{1})=m-m_{2}=\dim(u_{1})$.
As a consequence, the redundant inputs of the subsystem (\ref{eq:basic_decomposition_flat_subsys})
can always be found among the variables $x_{2}$.
\end{rem}

The equations
\[
x_{2}^{i_{2},+}=f_{2}^{i_{2}}(x_{1},x_{2},u_{1},u_{2})\,,\quad i_{2}=1,\ldots,m_{2}
\]
of (\ref{eq:basic_decomposition_flat}) can be interpreted as an endogenous
dynamic feedback for the subsystem (\ref{eq:basic_decomposition_flat_subsys}).
This is in accordance with the fact that applying or removing an endogenous
dynamic feedback has no effect on the flatness of a system.

Our next objective is to derive necessary and sufficient differential-geometric
conditions for the existence of a transformation of the system (\ref{eq:sys})
into the decomposed form (\ref{eq:basic_decomposition_flat}). To
formulate these conditions, we use the notion of $f$-related vector
fields. For completeness, we briefly explain the basics. More details
can be found in \cite{Boothby:1986}. By $f_{*}:\mathcal{T}(\mathcal{X}\times\mathcal{U})\rightarrow\mathcal{T}(\mathcal{X}^{+})$
we denote the tangent map of $f:\mathcal{X}\times\mathcal{U}\rightarrow\mathcal{X}^{+}$,
and by $f_{*p}:\mathcal{T}_{p}(\mathcal{X}\times\mathcal{U})\rightarrow\mathcal{T}_{f(p)}(\mathcal{X}^{+})$
we denote the tangent map of $f$ at some point $p\in\mathcal{X}\times\mathcal{U}$.
If
\begin{equation}
v=v_{x}^{i}(x,u)\partial_{x^{i}}+v_{u}^{j}(x,u)\partial_{u^{j}}\label{eq:f-related_v}
\end{equation}
is a vector field on $\mathcal{X}\times\mathcal{U}$, then the vector
$f_{*p}(v_{p})$ at $f(p)\in\mathcal{X}^{+}$ is called the pushforward
of the vector $v_{p}$ at $p\in\mathcal{X}\times\mathcal{U}$ by $f$.
However, since $f$ is only a submersion and not a diffeomorphism,
the vector field $v$ does not necessarily induce a well-defined vector
field on $\mathcal{X}^{+}$. The problem is that the inverse image
$f^{-1}(q)$ of a point $q\in\mathcal{X}^{+}$ is an $m$-dimensional
submanifold of $\mathcal{X}\times\mathcal{U}$, and it may happen
that for a pair of points $p_{1}$ and $p_{2}$ on this submanifold
we get $f_{*p_{1}}(v_{p_{1}})\neq f_{*p_{2}}(v_{p_{2}})$. In other
words, the vector at the point $f(p_{1})=f(p_{2})=q$ may be not unique.
If, however, there exists a vector field
\begin{equation}
w=w^{i}(x^{+})\partial_{x^{i,+}}\label{eq:f-related_w}
\end{equation}
on $\mathcal{X}^{+}$ such that for all $q\in\mathcal{X}^{+}$ and
$p\in f^{-1}(q)\subset\mathcal{X}\times\mathcal{U}$ we have $f_{*p}(v_{p})=w_{q}$,
then the vector fields $v$ and $w$ are said to be $f$-related and
we write $w=f_{*}(v)$. In components, $f$-relatedness means
\[
w^{i}(x^{+})\circ f(x,u)=\partial_{x^{k}}f^{i}v_{x}^{k}(x,u)+\partial_{u^{j}}f^{i}v_{u}^{j}(x,u)\,,
\]
$i=1,\ldots,n$. Since we assume that $f$ is a submersion and therefore
locally surjective, the vector field (\ref{eq:f-related_w}) determined
by a given vector field (\ref{eq:f-related_v}) is unique if it exists.
Moreover, as a submersion, the map $f$ induces a fibration (foliation)
of the manifold $\mathcal{X}\times\mathcal{U}$ with $m$-dimensional
fibres (leaves). Thus, we will adopt some terminology used for fibre
bundles (see e.g. \cite{Saunders:1989}), and call vector fields (\ref{eq:f-related_v})
on $\mathcal{X}\times\mathcal{U}$ that are $f$-related to a vector
field (\ref{eq:f-related_w}) on $\mathcal{X}^{+}$ ``projectable''.
Similarly, we will call a distribution $D$ on $\mathcal{X}\times\mathcal{U}$
``projectable'' if it admits a basis that consists of projectable
vector fields. Since we deal particularly with involutive distributions,
we will also make use of the fact that the Lie brackets $[v_{1},v_{2}]$
and $[w_{1},w_{2}]$ of two pairs $v_{1},w_{1}$ and $v_{2},w_{2}$
of $f$-related vector fields are again $f$-related, i.e.,
\[
f_{*}[v_{1},v_{2}]=[w_{1},w_{2}]\,.
\]
For this reason, the pushforward $f_{*}D$ of an involutive projectable
distribution is again an involutive distribution.

Checking whether a vector field or distribution is projectable or
not becomes very simple if we use coordinates on $\mathcal{X}\times\mathcal{U}$
that are adapted to the fibration. Adapted coordinates can be introduced
by a transformation of the form
\begin{equation}
\begin{array}{ccl}
x^{i,+} & = & f^{i}(x,u)\,,\quad i=1,\ldots,n\\
\xi^{j} & = & h^{j}(x,u)\,,\quad j=1,\ldots,m\,,
\end{array}\label{eq:adapted_coordinates}
\end{equation}
where the $m$ functions $h^{j}(x,u)$ must be chosen in such a way
that (\ref{eq:adapted_coordinates}) is a (local) diffeomorphism.
Thus, the Jacobian matrix
\[
\left[\begin{array}{cc}
\partial_{x}f & \partial_{u}f\\
\partial_{x}h & \partial_{u}h
\end{array}\right]
\]
must be regular. Because of the linear independence of the rows of
the Jacobian matrix of a submersion, this is always possible. With
coordinates $(x^{+},\xi)$ on $\mathcal{X}\times\mathcal{U}$, the
map $f$ takes the simple form $f=\mathrm{pr}_{1}$. All points of
$\mathcal{X}\times\mathcal{U}$ with the same value of $x^{+}$ belong
to the same fibre and are mapped to the same point of $\mathcal{X}^{+}$,
regardless of the value of the fibre coordinates $\xi$. The vector
field (\ref{eq:f-related_v}) in adapted coordinates has in general
the form
\begin{equation}
v=a^{i}(x^{+},\xi)\partial_{x^{i,+}}+b^{j}(x^{+},\xi)\partial_{\xi^{j}}\,,\label{eq:f-related_v_adapt}
\end{equation}
and because of $f=\mathrm{pr}_{1}$ an application of the tangent
map $f_{*}$ to (\ref{eq:f-related_v_adapt}) yields
\begin{equation}
f_{*}(v)=a^{i}(x^{+},\xi)\partial_{x^{i,+}}\,.\label{eq:f-related_fstar_v_adapt}
\end{equation}
Obviously, (\ref{eq:f-related_fstar_v_adapt}) is a well-defined vector
field on $\mathcal{X}^{+}$ if and only if the functions $a^{i}$
are independent of the coordinates $\xi$. In this case, (\ref{eq:f-related_fstar_v_adapt})
corresponds to the vector field (\ref{eq:f-related_w}).

With these mathematical preliminaries, we can formulate conditions
for the existence of a transformation of the system (\ref{eq:sys})
into the form (\ref{eq:basic_decomposition_flat}).
\begin{thm}
\label{thm:decomposition_conditions_distribution}Consider a system
(\ref{eq:sys}) with $\mathrm{rank}(\partial_{u}f)=m$. There exists
a coordinate transformation\begin{subequations}\label{eq:decomposition_coord_transformation}
\begin{align}
(\bar{x}_{1},\bar{x}_{2})=\:\: & \Phi_{x}(x)\label{eq:decompostion_state_transformation}\\
(\bar{u}_{1},\bar{u}_{2})=\:\: & \Phi_{u}(x,u)\label{eq:decompostion_input_transformation}
\end{align}
\end{subequations}with $\dim(\bar{u}_{2})=\dim(\bar{x}_{2})=m_{2}$
such that in transformed coordinates the system has the form
\begin{equation}
\begin{array}{ccl}
\bar{x}_{1}^{+} & = & \bar{f}_{1}(\bar{x}_{1},\bar{x}_{2},\bar{u}_{1})\\
\bar{x}_{2}^{+} & = & \bar{f}_{2}(\bar{x}_{1},\bar{x}_{2},\bar{u}_{1},\bar{u}_{2})
\end{array}\label{eq:sys_decomposed}
\end{equation}
if and only if on $\mathcal{X}\times\mathcal{U}$ there exists an
$m_{2}$-dimensional projectable and involutive subdistribution $D\subset\mathrm{span}\{\partial_{u}\}$.
\end{thm}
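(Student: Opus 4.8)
The plan is to establish both directions of the equivalence, using only state- and input transformations together with the Frobenius theorem.

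For necessity, I would start from a system already written in the form (\ref{eq:sys_decomposed}); note that $\mathrm{rank}(\partial_{\bar u}\bar f)=m$ is preserved by coordinate transformations, and together with the block-triangular structure this forces $\partial_{\bar u_2}\bar f_2$ to be regular. Exactly as in the proof of Lemma \ref{lem:basic_decomposition_flat}, the input transformation $\hat u_2=\bar f_2(\bar x_1,\bar x_2,\bar u_1,\bar u_2)$ (leaving $\bar x_1,\bar x_2,\bar u_1$ unchanged) then brings the system into the simpler form $\bar x_1^+=\bar f_1(\bar x_1,\bar x_2,\bar u_1)$, $\bar x_2^+=\hat u_2$. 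In these coordinates the distribution $\mathrm{span}\{\partial_{\hat u_2}\}$ is $m_2$-dimensional, involutive, contained in $\mathrm{span}\{\partial_{\hat u}\}$, and projectable, since each $\partial_{\hat u_2^{k}}$ pushes forward to $\partial_{\bar x_2^{k,+}}$. As a state- and input transformation is a bundle map over $\mathcal{X}$, transforming everything back to the original coordinates $(x,u)$ preserves containment in the input distribution, involutivity, and---because $f$-relatedness is compatible with such transformations---projectability; the distribution obtained this way is the required $D$.

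For sufficiency, suppose $D\subset\mathrm{span}\{\partial_u\}$ is $m_2$-dimensional, involutive and projectable. First I would observe that $\tilde D:=f_*D$ is a well-defined, $m_2$-dimensional, involutive distribution on $\mathcal{X}^+$: well-definedness and involutivity follow from $D$ being projectable and involutive (Lie brackets of $f$-related fields are $f$-related), and $\tilde D$ has full rank $m_2$ because $\mathrm{rank}(\partial_u f)=m$ makes $f_*$ injective on $\mathrm{span}\{\partial_u\}\supset D$ at every point. Applying the Frobenius theorem to $\tilde D$ yields a state transformation $\bar x=\Phi_x(x)$, with induced coordinates $\bar x^+=(\bar x_1^+,\bar x_2^+)$ on $\mathcal{X}^+$ where $\dim(\bar x_2^+)=m_2$, such that $(\Phi_x)_*\tilde D=\mathrm{span}\{\partial_{\bar x_2^+}\}$. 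I would then straighten $D$ itself: since $D$ is vertical, every vector field in $D$ annihilates $\bar x^1,\dots,\bar x^n$, so these are functionally independent first integrals of $D$; because $D$ is $m_2$-dimensional and involutive, the Frobenius theorem allows us to complete them by $m-m_2$ further first integrals $\psi^1,\dots,\psi^{m-m_2}$ and then by $m_2$ more functions $\chi^1,\dots,\chi^{m_2}$ to a coordinate system $(\bar x,\psi,\chi)$ on $\mathcal{X}\times\mathcal{U}$ in which $D=\mathrm{span}\{\partial_{\chi^1},\dots,\partial_{\chi^{m_2}}\}$. Since $\bar x$ is untouched, setting $\bar u_1=\psi$ and $\bar u_2=\chi$ defines an admissible input transformation $\bar u=\Phi_u(x,u)$.

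Finally I would read off the structure. In the combined coordinates $(\bar x_1,\bar x_2,\bar u_1,\bar u_2)$ we have simultaneously $D=\mathrm{span}\{\partial_{\bar u_2}\}$ and $f_*D=\mathrm{span}\{\partial_{\bar x_2^+}\}$, so each $\partial_{\bar u_2^{k}}$ pushes forward under the system map into $\mathrm{span}\{\partial_{\bar x_2^+}\}$; written in components this is precisely $\partial_{\bar u_2^{k}}\bar f_1^{i_1}=0$, hence $\bar f_1$ is independent of $\bar u_2$ and the system has the form (\ref{eq:sys_decomposed}). The step I expect to require the most care is this coordination of the two straightenings---of $\tilde D$ on $\mathcal{X}^+$ and of $D$ on $\mathcal{X}\times\mathcal{U}$---into a single transformation of the form (\ref{eq:decomposition_coord_transformation}): one must make sure that straightening $D$ does not disturb the state coordinates chosen for $\tilde D$, and the reason this works is exactly that $D$ is vertical, which forces the state coordinates to be first integrals of $D$.
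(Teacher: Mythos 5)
Your proof is correct and follows essentially the same route as the paper: straighten $D$ and $f_*D$ with the Frobenius theorem and read off $\partial_{\bar u_2}\bar f_1=0$ for sufficiency, and normalize via $\hat u_2=\bar f_2$ to exhibit $\mathrm{span}\{\partial_{\hat u_2}\}$ for necessity. The only difference is that you make explicit (via the first-integral argument) why verticality of $D$ lets the straightening be realized as a pure input transformation leaving the state coordinates intact, a point the paper leaves implicit.
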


\begin{proof}
\emph{\ }\\
\emph{Sufficiency:} Since $D$ is involutive and $D\subset\mathrm{span}\{\partial_{u}\}$,
because of the Frobenius theorem there exists an input transformation
(\ref{eq:decompostion_input_transformation}) with $\dim(\bar{u}_{2})=m_{2}$
such that $D=\mathrm{span}\{\partial_{\bar{u}_{2}}\}$. Furthermore,
since $D\subset\mathrm{span}\{\partial_{u}\}$ is projectable and
the Jacobian matrix $\partial_{u}f$ has full rank, the pushforward
$f_{*}D$ is a well-defined $m_{2}$-dimensional involutive distribution
on $\mathcal{X}^{+}$. Thus, because of the Frobenius theorem there
exists a state transformation (\ref{eq:decompostion_state_transformation})
with $\dim(\bar{x}_{2})=m_{2}$ such that $f_{*}D=\mathrm{span}\{\partial_{\bar{x}_{2}^{+}}\}$.\footnote{Note again that state transformations are performed simultaneously
for $x$ and $x^{+}$.} In these coordinates, the transformed map $\bar{x}^{+}=\bar{f}(\bar{x},\bar{u})$
has the form (\ref{eq:sys_decomposed}). This can be seen as follows:
Let $\bar{f}_{1}$ and $\bar{f}_{2}$ denote the $\bar{x}_{1}$- and
$\bar{x}_{2}$-components of $\bar{f}$. Then the (pointwise defined)
pushforwards of the vector fields $\partial_{\bar{u}_{2}^{j_{2}}}$,
$j_{2}=1,\ldots,m_{2}$ are given by
\[
f_{*}(\partial_{\bar{u}_{2}^{j_{2}}})=\partial_{\bar{u}_{2}^{j_{2}}}\bar{f}_{1}^{i_{1}}\partial_{\bar{x}_{1}^{i_{1},+}}+\partial_{\bar{u}_{2}^{j_{2}}}\bar{f}_{2}^{i_{2}}\partial_{\bar{x}_{2}^{i_{2},+}}\,.
\]
Since by construction $f_{*}(\partial_{\bar{u}_{2}^{j_{2}}})\in f_{*}D=\mathrm{span}\{\partial_{\bar{x}_{2}^{+}}\}$,
we immediately get
\[
\partial_{\bar{u}_{2}^{j_{2}}}\bar{f}_{1}^{i_{1}}=0\,,\quad i_{1}=1,\ldots,n-m_{2},\,j_{2}=1,\ldots,m_{2}\,,
\]
which shows that the functions $\bar{f}_{1}^{i_{1}}$ are independent
of $\bar{u}_{2}$.\emph{}\\
\emph{Necessity:} To prove necessity, assume that there exists a coordinate
transformation (\ref{eq:decomposition_coord_transformation}) such
that (\ref{eq:sys}) takes the form (\ref{eq:sys_decomposed}). Because
of $\mathrm{rank}(\partial_{\bar{u}_{2}}\bar{f}_{2})=m_{2}$, there
exists a further input transformation $\hat{u}_{2}^{j_{2}}=\bar{f}_{2}^{j_{2}}(\bar{x}_{1},\bar{x}_{2},\bar{u}_{1},\bar{u}_{2})$,
$j_{2}=1,\ldots,m_{2}$ such that the system is of the form (\ref{eq:basic_decomposition_flat_normed}).
The vector fields $\partial_{\hat{u}_{2}^{j_{2}}}$, $j_{2}=1,\ldots,m_{2}$
are clearly projectable with $f_{*}(\partial_{\hat{u}_{2}^{j_{2}}})=\partial_{\bar{x}_{2}^{j_{2},+}}$,
and therefore the distribution $D=\mathrm{span}\{\partial_{\hat{u}_{2}}\}$
is an $m_{2}$-dimensional, projectable and involutive subdistribution
of $\mathrm{span}\{\partial_{u}\}$.
\end{proof}
The decomposition of Theorem \ref{thm:decomposition_conditions_distribution}
is a generalization of a decomposition that is used in \cite{Grizzle:1986}
and \cite{NijmeijervanderSchaft:1990} for static feedback linearizable
systems. If a system (\ref{eq:sys}) with $\mathrm{rank}(\partial_{u}f)=m$
is static feedback linearizable, then the complete input distribution
$\mathrm{span}\{\partial_{u}\}$ is projectable. Thus, we can choose
$D=\mathrm{span}\{\partial_{u}\}$. Since this distribution is already
straightened out, no input transformation is required. With a state
transformation that straightens out the pushforward $f_{*}D$, the
system can be transformed into the form
\begin{equation}
\begin{array}{lcl}
\bar{x}_{1}^{i_{1},+}=\bar{f}_{1}^{i_{1}}(\bar{x}_{1},\bar{x}_{2})\,, & \quad & i_{1}=1,\ldots,n-m\\
\bar{x}_{2}^{i_{2},+}=\bar{f}_{2}^{i_{2}}(\bar{x}_{1},\bar{x}_{2},u)\,, &  & i_{2}=1,\ldots,m\,,
\end{array}\label{eq:sys_decomposed_static_feedback_lin}
\end{equation}
where the first $n-m$ equations are independent of all inputs. For
systems that are only flat but not static feedback linearizable, a
transformation into the form (\ref{eq:sys_decomposed_static_feedback_lin})
is in general not possible. However, we will show that a flat system
can always be transformed into the form (\ref{eq:sys_decomposed})
with $m_{2}\geq1$. That is, in the ``worst case'' with $m_{2}=1$
there exists at least a decomposition
\begin{equation}
\begin{array}{ccl}
\bar{x}^{1,+} & = & \bar{f}^{1}(\bar{x},\bar{u}^{1},\ldots,\bar{u}^{m-1})\\
 & \vdots\\
\bar{x}^{n-1,+} & = & \bar{f}^{n-1}(\bar{x},\bar{u}^{1},\ldots,\bar{u}^{m-1})\\
\bar{x}^{n,+} & = & \bar{f}^{n}(\bar{x},\bar{u}^{1},\ldots,\bar{u}^{m-1},\bar{u}^{m})
\end{array}\label{eq:sys_decomposed_1dim}
\end{equation}
where the first $n-1$ equations are independent of $\bar{u}^{m}$.\footnote{For the discussion of the case $m_{2}=1$ we will mainly use the notation
(\ref{eq:sys_decomposed_1dim}) with individual variables instead
of the notation (\ref{eq:sys_decomposed}) with blocks of variables
$\bar{x}_{1},\bar{x}_{2},\bar{u}_{1},\bar{u}_{2}$.} To keep the proof of this remarkable feature of flat systems as short
as possible, it is convenient to rewrite the conditions of Theorem
\ref{thm:decomposition_conditions_distribution} for the case $m_{2}=1$
in terms of $f$-related vector fields instead of distributions.
\begin{cor}
\label{cor:decomposition_conditions_vectorfield}A system (\ref{eq:sys})
with $\mathrm{rank}(\partial_{u}f)=m$ can be transformed into the
form (\ref{eq:sys_decomposed_1dim}) if and only if there exists a
pair of vector fields
\[
v=v^{j}(x,u)\partial_{u^{j}}
\]
on $\mathcal{X}\times\mathcal{U}$ and
\[
w=w^{i}(x^{+})\partial_{x^{i,+}}
\]
on $\mathcal{X}^{+}$ which are $f$-related, i.e., that satisfy
\begin{equation}
w^{i}\circ f(x,u)=\left(\partial_{u^{j}}f^{i}(x,u)\right)v^{j}(x,u)\,,\quad i=1,\ldots,n\,.\label{eq:decomposition_conditions_f-relatedness}
\end{equation}
\end{cor}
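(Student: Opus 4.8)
The plan is to derive Corollary \ref{cor:decomposition_conditions_vectorfield} directly from Theorem \ref{thm:decomposition_conditions_distribution} by specializing to the case $m_2=1$ and translating the statement about a one-dimensional projectable involutive subdistribution $D\subset\mathrm{span}\{\partial_u\}$ into the statement about a single pair of $f$-related vector fields. The first observation is that any one-dimensional distribution is automatically involutive, so the involutivity hypothesis in Theorem \ref{thm:decomposition_conditions_distribution} is vacuous when $m_2=1$ and can be dropped. Hence the theorem reduces to: the system can be brought into the form \eqref{eq:sys_decomposed_1dim} if and only if there is a one-dimensional projectable subdistribution of $\mathrm{span}\{\partial_u\}$.

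Next I would make the correspondence between such a distribution and a pair of $f$-related vector fields explicit. A one-dimensional subdistribution of $\mathrm{span}\{\partial_u\}$ is precisely $D=\mathrm{span}\{v\}$ for some nowhere-vanishing vector field $v=v^j(x,u)\partial_{u^j}$; conversely any such $v$ spans a one-dimensional subdistribution of $\mathrm{span}\{\partial_u\}$. By the discussion of projectability in the excerpt, $D=\mathrm{span}\{v\}$ is projectable exactly when there exists a vector field $w=w^i(x^+)\partial_{x^{i,+}}$ on $\mathcal{X}^+$ with $f_*(v)=w$, and—since $f$ is a submersion—this $w$ is unique if it exists. Writing out $f_*(v)=w$ in components using the formula for $f$-relatedness stated just before Theorem \ref{thm:decomposition_conditions_distribution} (with $v_x^k=0$ here) yields exactly condition \eqref{eq:decomposition_conditions_f-relatedness}. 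This chain of equivalences is what proves the corollary.

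The one point that deserves a word of care is the correspondence between ``$D$ is a subdistribution of $\mathrm{span}\{\partial_u\}$ of dimension $m_2$'' as a constant-rank object and the choice of a spanning vector field: for $m_2=1$ this is elementary, since a rank-one distribution is locally spanned by a single non-vanishing section and local results are all that is claimed. Thus I expect no real obstacle; the proof is essentially a restatement of Theorem \ref{thm:decomposition_conditions_distribution} once one notes that involutivity is automatic in dimension one and that a rank-one distribution in $\mathrm{span}\{\partial_u\}$ is the span of a vertical vector field $v^j(x,u)\partial_{u^j}$. The only thing to keep in mind is to phrase the statement so that the (harmless) possibility of $v$ being defined only up to a nowhere-zero scalar factor is consistent with $w$ being likewise determined only up to the same factor, which is exactly what \eqref{eq:decomposition_conditions_f-relatedness} reflects.
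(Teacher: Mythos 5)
Your proof is correct, but it takes a different route from the paper's. You derive the corollary as the $m_2=1$ instance of Theorem~\ref{thm:decomposition_conditions_distribution}: involutivity is vacuous in rank one, a one-dimensional subdistribution of $\mathrm{span}\{\partial_u\}$ is the span of a nowhere-vanishing vertical field $v=v^j(x,u)\partial_{u^j}$, and projectability of that span is exactly the existence of an $f$-related $w=w^i(x^+)\partial_{x^{i,+}}$, which in components (with $v_x=0$) is condition~(\ref{eq:decomposition_conditions_f-relatedness}). The paper instead gives a self-contained direct proof that mirrors the theorem's proof: sufficiency by straightening out $v$ and $w$ with the flow-box theorem (rather than the Frobenius theorem) and reading off $\partial_{\bar u^m}\bar f^i=0$ for $i<n$, necessity by normalizing $\hat u^m=\bar f^n$ and exhibiting $v=\partial_{\hat u^m}$, $w=\partial_{\bar x^{n,+}}$; the authors then note the relation $D=\mathrm{span}\{v\}$, $f_*D=\mathrm{span}\{w\}$ as a remark, which is essentially your reduction run in reverse. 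Your version is more economical and makes the logical dependence on the theorem explicit; the paper's version buys a concrete constructive recipe (flows instead of integrating a distribution) that is reused in the algorithm and the examples. The only point to make explicit in your write-up is the implicit nonvanishing of $v$ near the point of interest: as literally stated, a pair $v=0$, $w=0$ satisfies (\ref{eq:decomposition_conditions_f-relatedness}) but spans no one-dimensional distribution and cannot be straightened out; this regularity assumption is tacit in the paper as well and is exactly what your identification with a rank-one distribution supplies.
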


\begin{proof}
\ 

\emph{Sufficiency:} Because of the flow-box theorem, there exists
an input transformation
\[
\bar{u}^{j}=\Phi_{u}^{j}(x,u)\,,\quad j=1,\ldots,m
\]
which transforms the vector field $v$ into the form
\[
v=\partial_{\bar{u}^{m}}\,,
\]
and a state transformation
\[
\bar{x}^{i}=\Phi_{x}^{i}(x)\,,\quad i=1,\ldots,n
\]
which transforms the vector field $w$ into the form
\[
w=\partial_{\bar{x}^{n,+}}\,.
\]
In these new coordinates, condition (\ref{eq:decomposition_conditions_f-relatedness})
has the form\footnote{Here $\delta_{n}^{i}$ and $\delta_{m}^{j}$ is the Kronecker delta
and not a shift operator.}
\[
\begin{array}{ccl}
\delta_{n}^{i} & = & \left(\partial_{\bar{u}^{j}}\bar{f}^{i}(\bar{x},\bar{u})\right)\delta_{m}^{j}\\
 & = & \partial_{\bar{u}^{m}}\bar{f}^{i}(\bar{x},\bar{u})
\end{array}\,,\quad i=1,\ldots,n\,.
\]
Because of $\partial_{\bar{u}^{m}}\bar{f}^{i}(\bar{x},\bar{u})=0$
for $i=1,\ldots,n-1$, the functions $\bar{f}^{1},\ldots,\bar{f}^{n-1}$
are independent of $\bar{u}^{m}$.

\emph{Necessity:} If the system is in the form (\ref{eq:sys_decomposed_1dim}),
we can perform an input transformation $\hat{u}^{m}=\bar{f}^{n}(\bar{x},\bar{u})$
such that we get
\[
\begin{array}{ccl}
\bar{x}^{1,+} & = & \bar{f}^{1}(\bar{x},\bar{u}^{1},\ldots,\bar{u}^{m-1})\\
 & \vdots\\
\bar{x}^{n-1,+} & = & \bar{f}^{n-1}(\bar{x},\bar{u}^{1},\ldots,\bar{u}^{m-1})\\
\bar{x}^{n,+} & = & \hat{u}^{m}\,.
\end{array}
\]
In these coordinates, it is obvious that the vector fields $v=\partial_{\hat{u}^{m}}$
and $w=\partial_{\bar{x}^{n,+}}$ are $f$-related.
\end{proof}
The concept of the proofs of Theorem \ref{thm:decomposition_conditions_distribution}
and Corollary \ref{cor:decomposition_conditions_vectorfield} is of
course almost identical. The difference is that in the proof of Corollary
\ref{cor:decomposition_conditions_vectorfield} we straighten out
vector fields with the flow-box theorem, whereas in the proof of Theorem
\ref{thm:decomposition_conditions_distribution} we straighten out
distributions with the Frobenius theorem. The connection between the
distributions of Theorem \ref{thm:decomposition_conditions_distribution}
and the vector fields of Corollary \ref{cor:decomposition_conditions_vectorfield}
is obviously given by
\[
D=\mathrm{span}\{v\}\quad\text{and}\quad f_{*}D=\mathrm{span}\{w\}\,.
\]
In the following, we prove the main result of the paper.
\begin{thm}
\label{thm:decomposition_flat}A flat system (\ref{eq:sys}) with
$\mathrm{rank}(\partial_{u}f)=m$ can be transformed into the form
(\ref{eq:sys_decomposed_1dim}), i.e., (\ref{eq:sys_decomposed})
with $m_{2}=1$.
\end{thm}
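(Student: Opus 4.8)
The plan is to construct the $f$-related pair of vector fields $(v,w)$ required by Corollary \ref{cor:decomposition_conditions_vectorfield} directly from the flat parametrization \eqref{eq:flat_parametrization}. Since the system is flat, we have the submersions $F:(y_{[0,R]})\mapsto(x,u)$ and its "inverse" \eqref{eq:flat_parametrization_inverse}, together with the key identity \eqref{eq:sys_identity_y}. The idea is to push a suitable constant coordinate vector field on the space with coordinates $(y,y_{[1]},y_{[2]},\ldots)$ forward along $F$ and check that it lands in $\mathrm{span}\{\partial_u\}$ and is $f$-related to something on $\mathcal{X}^+$. First I would observe that, because $F_x$ depends only on $y_{[0,R-1]}$ while the highest shifts $y_{[R]}=(y^1_{[r_1]},\ldots,y^m_{[r_m]})$ appear only in $F_u$, the vector field $\partial_{y^{m}_{[r_m]}}$ (or, more carefully, any $\partial_{y^{j}_{[r_j]}}$ with $r_j$ maximal among those that actually occur) is annihilated by $dF_x$. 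Hence its pushforward under $F$ — which is well defined as a pointwise construction on the image — has no $\partial_x$ component and therefore lies in $\mathrm{span}\{\partial_u\}$. This produces the candidate $v$.

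Next I would produce $w$ and verify $f$-relatedness. Apply the shift operator $\delta_y$ to $\partial_{y^{j}_{[r_j]}}$: since $\delta_y$ merely relabels $y_{[\alpha]}\mapsto y_{[\alpha+1]}$, it sends this field to $\partial_{y^{j}_{[r_j+1]}}$. Differentiating the identity \eqref{eq:sys_identity_y}, $\delta_y(F_x^i)=f^i\circ F$, along $\partial_{y^{j}_{[r_j]}}$ shows that the pushforward of $v$ by $f$ equals the pushforward by $F$ (evaluated one shift later) of $\partial_{y^{j}_{[r_j+1]}}$ restricted to the $x^+$-coordinates; because $F_x$ depends only on $y_{[0,R-1]}$, the latter depends only on $x^+ = F_x(y_{[0,R-1]})$ and not on the fibre coordinates. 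Concretely: write $w^i := (\partial_{y^{j}_{[r_j+1]}} F_x^i)\circ(\text{the representation of }y_{[0,R-1]}\text{ in terms of }x^+)$ — this is well defined precisely because $F_x$ has the stated dependence, so $y_{[0,R-1]}$ can be recovered from $x^+$ along each fibre via the submersion structure. One then checks \eqref{eq:decomposition_conditions_f-relatedness} coordinate-wise by substituting the parametrizations into both sides and using \eqref{eq:sys_identity_y}.

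The main obstacle I anticipate is making the "recover $y_{[0,R-1]}$ from $x^+$" step rigorous, i.e. showing that $w$ as defined above is genuinely a function of $x^+$ alone rather than of the full coordinates on $\mathcal{X}\times\mathcal{U}$. This is essentially the assertion that $\mathrm{span}\{v\}$ is projectable, and it should follow from the functional independence of the flat output and its forward-shifts (the submersion property of \eqref{eq:flat_parametrization_inverse}) together with the fact that $F_x$ does not involve $y_{[R]}$; but care is needed because $F$ is only a submersion, so one argues fibre-wise. A secondary point to handle is the degenerate case where some component of $R$ is zero or where several $r_j$ coincide — here one simply picks an index $j$ with $r_j\ge 1$ realizing the genuine need for a top-order shift, which exists because otherwise $x$ alone would parametrize $u$ and the system would be static-feedback linearizable, a case already covered by the decomposition \eqref{eq:sys_decomposed_static_feedback_lin} with $m_2=m\ge 1$. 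Assembling these pieces gives vector fields $v\in\mathrm{span}\{\partial_u\}$ and $w$ on $\mathcal{X}^+$ satisfying \eqref{eq:decomposition_conditions_f-relatedness}, and Corollary \ref{cor:decomposition_conditions_vectorfield} then yields the desired transformation into the form \eqref{eq:sys_decomposed_1dim}.
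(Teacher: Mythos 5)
Your overall strategy coincides with the paper's: start from the identity (\ref{eq:sys_identity_y}), differentiate with respect to a top-order shift $y^{s}_{[r_s]}$ (which kills any $\partial_{x}f$ terms because $F_x$ does not involve $y_{[R]}$), and read off a pair of $f$-related vector fields to feed into Corollary \ref{cor:decomposition_conditions_vectorfield}. The gap sits exactly where you flag "the main obstacle": you never establish that the candidate $w$ is a function of $x^{+}$ alone, nor that $v$ is a function of $(x,u)$ alone. Your proposed mechanism --- composing the derivative of $F_x$ with "the representation of $y_{[0,R-1]}$ in terms of $x^{+}$" --- cannot work as stated, because $F_x$ is in general a non-injective submersion: $\dim(y_{[0,R-1]})=\sum_j r_j$ typically exceeds $n$ (in the paper's first example, five $y$-variables parametrize a four-dimensional state), so $y_{[0,R-1]}$ is not recoverable from $x^{+}$. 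After substituting (\ref{eq:flat_parametrization_inverse}), the coefficient functions $\tilde w^{i}$ and $\tilde v^{j}$ are in general functions of $x$, $u$ \emph{and arbitrarily many forward-shifts of $u$}; the paper's remark on the flat output $y=(x^{1}(x^{3}+1)+e^{u^{1}+2u^{2}},x^{3})$ gives an explicit instance. So projectability is not automatic, and the functional independence of the flat output and its shifts does not by itself deliver it.

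The missing idea is the substitution step: the identity $\tilde w^{i}(f(x,u),u_{[1]},u_{[2]},\ldots)=(\partial_{u^{j}}f^{i})\,\tilde v^{j}(x,u,u_{[1]},\ldots)$ holds for all coordinate values, hence in particular on the subspace $u_{[1]}=c_1$, $u_{[2]}=c_2,\ldots$ for arbitrary constants near $u_0$. Setting $w^{i}(x^{+}):=\tilde w^{i}(x^{+},c_1,c_2,\ldots)$ and $v^{j}(x,u):=\tilde v^{j}(x,u,c_1,\ldots)$ then yields genuine vector fields on $\mathcal{X}^{+}$ and $\mathcal{X}\times\mathcal{U}$ satisfying (\ref{eq:decomposition_conditions_f-relatedness}); freezing the input shifts is only a device for \emph{constructing} the pair and does not restrict the validity of the resulting decomposition. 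Without this (or an equivalent device) your argument does not close. Two minor further points: your derivative $\partial_{y^{j}_{[r_j+1]}}F_x^{i}$ is identically zero as written --- the relevant object is $\delta_y\bigl(\partial_{y^{s}_{[r_s-1]}}F_x^{i}\bigr)$ --- and the flow-box step additionally needs $v\neq 0$ at the point considered, which one gets by choosing $s$ so that $y^{s}_{[r_s]}$ genuinely occurs in $F_u$.
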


\begin{proof}
The proof is based on the identity (\ref{eq:sys_identity_y}). Differentiating
both sides of (\ref{eq:sys_identity_y}) with respect to $y_{[r_{s}]}^{s}$
for some arbitrary $s\in\{1,\ldots,m\}$ gives
\[
\partial_{y_{[r_{s}]}^{s}}\left(\delta_{y}(F_{x}^{i})\right)=\left(\partial_{u^{j}}f^{i}\circ F\right)\partial_{y_{[r_{s}]}^{s}}F_{u}^{j}\,,\quad i=1,\ldots,n\,.
\]
Since $\delta_{y}$ only substitutes variables, shifting and differentiating
with respect to $y_{[r_{s}]}^{s}$ is equivalent to first differentiating
with respect to $y_{[r_{s}-1]}^{s}$ and shifting afterwards. Thus,
we get the equivalent identity
\begin{equation}
\delta_{y}\left(\partial_{y_{[r_{s}-1]}^{s}}F_{x}^{i}\right)=\left(\partial_{u^{j}}f^{i}\circ F\right)\partial_{y_{[r_{s}]}^{s}}F_{u}^{j}\,,\quad i=1,\ldots,n.\label{eq:sys_identity_y_diff_shift}
\end{equation}
Now let us consider this identity in coordinates $(x,u,u_{[1]},\ldots)$.
Substituting (\ref{eq:flat_parametrization_inverse}) into (\ref{eq:sys_identity_y_diff_shift})
gives the identity
\begin{equation}
\delta_{xu}(\tilde{w}^{i}(x,u,u_{[1]},\ldots))=(\partial_{u^{j}}f^{i})\tilde{v}^{j}(x,u,u_{[1]},\ldots)\,.\label{eq:identity_wtilde_vtilde_shift}
\end{equation}
The functions $\tilde{w}^{i}(x,u,u_{[1]},\ldots)$ and $\tilde{v}^{j}(x,u,u_{[1]},\ldots)$
of (\ref{eq:identity_wtilde_vtilde_shift}) are obtained by substituting
(\ref{eq:flat_parametrization_inverse}) into the functions $\partial_{y_{[r_{s}-1]}^{s}}F_{x}^{i}$
and $\partial_{y_{[r_{s}]}^{s}}F_{u}^{j}$ of (\ref{eq:sys_identity_y_diff_shift}).
Note also that substituting (\ref{eq:flat_parametrization_inverse})
into $\partial_{u^{j}}f^{i}\circ F$ yields just $\partial_{u^{j}}f^{i}$,
and that we have to replace the shift operator $\delta_{y}$ in $y$-coordinates
by the shift operator $\delta_{xu}$ in $(x,u)$-coordinates.

Evaluating the expression $\delta_{xu}(w^{i}(x,u,u_{[1]},\ldots))$
on the left-hand side of (\ref{eq:identity_wtilde_vtilde_shift})
yields
\begin{equation}
\tilde{w}^{i}(f(x,u),u_{[1]},u_{[2]},\ldots)=(\partial_{u^{j}}f^{i})\tilde{v}^{j}(x,u,u_{[1]},\ldots)\,.\label{eq:identity_wtilde_vtilde}
\end{equation}
This identity holds (locally) for all values of $x,u,u_{[1]},\ldots$.
Thus, if we evaluate (\ref{eq:identity_wtilde_vtilde})
at any particular point of our underlying space with concrete numerical
values of the coordinates $(x,u,u_{[1]},u_{[2]},\ldots)$, we still
get a valid identity. The same is of course true if we evaluate (\ref{eq:identity_wtilde_vtilde})
on a subspace by setting only some of the coordinates to numerical
values. For our purpose, it is beneficial to evaluate the identity
on a subspace determined by
\begin{equation}
\begin{array}{ccc}
u_{[1]} & = & c_{1}\\
u_{[2]} & = & c_{2}\\
 & \vdots
\end{array}\label{eq:u_forward_constants}
\end{equation}
with arbitrary numerical values for the forward-shifts of $u$ appearing
in (\ref{eq:identity_wtilde_vtilde}).\footnote{Of course the numerical values must be chosen such
that we do not violate localness and still are in a region where the
system is flat. Otherwise, (\ref{eq:identity_wtilde_vtilde}) would
not hold any more. A choice which is sufficiently close to the value
of $u_{0}$ from the equilibrium (\ref{eq:equilibrium}) is always
possible.} By doing so, we get the relation
\[
\tilde{w}^{i}(f(x,u),c_{1},c_{2},\ldots)=(\partial_{u^{j}}f^{i})\tilde{v}^{j}(x,u,c_{1},\ldots)\,.
\]
With
\begin{equation}
w^{i}(x^{+})=\tilde{w}^{i}(x^{+},c_{1},c_{2},\ldots)\label{eq:w_wtilde}
\end{equation}
and
\begin{equation}
v^{j}(x,u)=\tilde{v}^{j}(x,u,c_{1},\ldots)\label{eq:v_vtilde}
\end{equation}
this can be written as
\[
w^{i}(x^{+})\circ f(x,u)=(\partial_{u^{j}}f^{i})v^{j}(x,u)\,,\quad i=1,\ldots,n\,,
\]
which is just condition (\ref{eq:decomposition_conditions_f-relatedness}).
Thus, the vector fields
\begin{equation}
v=v^{j}(x,u)\partial_{u^{j}}\label{eq:proof_f-related_v}
\end{equation}
on $\mathcal{X}\times\mathcal{U}$ and
\begin{equation}
w=w^{i}(x^{+})\partial_{x^{i,+}}\label{eq:proof_f-related_w}
\end{equation}
on $\mathcal{X}^{+}$ are $f$-related. Applying Corollary \ref{cor:decomposition_conditions_vectorfield}
completes the proof.
\end{proof}
\begin{rem}
Let us summarize the idea of the proof once more.
Starting with the identity (\ref{eq:sys_identity_y}), which is a
basic property of every flat system, it is always possible to construct
a pair of $f$-related vector fields (\ref{eq:proof_f-related_v})
and (\ref{eq:proof_f-related_w}). The choice for the numerical values
$c_{1},c_{2},\ldots$, which are used for the construction of these
vector fields, is of course not unique, and for different choices
we get in general different pairs of $f$-related vector fields. However,
as soon as we have any pair of $f$-related vector fields, no matter
how they were constructed, we can straighten them out by the flow-box
theorem and get a state transformation and an input transformation
which transforms the system into the decomposed form (\ref{eq:sys_decomposed_1dim}),
cf. Corollary \ref{cor:decomposition_conditions_vectorfield}. The
transformed system equations (\ref{eq:sys_decomposed_1dim}) are just
as general as the original ones, and of course not restricted to input
sequences with the numerical values (\ref{eq:u_forward_constants})
used for the construction of the vector fields (\ref{eq:proof_f-related_v})
and (\ref{eq:proof_f-related_w}). Substituting numerical values (\ref{eq:u_forward_constants})
into the identity (\ref{eq:identity_wtilde_vtilde}) is just a useful
operation to construct the vector fields (\ref{eq:proof_f-related_v})
and (\ref{eq:proof_f-related_w}), but does not restrict the validity
of the transformed system equations (\ref{eq:sys_decomposed_1dim})
obtained by straightening these vector fields out.
\end{rem}

As a consequence of Theorem \ref{thm:decomposition_flat}, the existence
of a decomposed form (\ref{eq:sys_decomposed_1dim}) is a necessary
condition for flat discrete-time systems. Based on similar ideas as
in the proof of Theorem \ref{thm:decomposition_flat}, it has been
shown in \cite{KolarSchoberlSchlacher:2016-3} that for flat continuous-time
systems
\[
\dot{x}^{i}=f^{i}(x,u)\,,\quad i=1,\ldots,n
\]
there always exists a transformation $\bar{u}=\Phi_{u}(x,u)$ into
the so-called partial affine input form (PAI-form)
\begin{equation}
\dot{x}^{i}=a^{i}(x,\bar{u}^{1},\ldots,\bar{u}^{m-1})+b^{i}(x,\bar{u}^{1},\ldots,\bar{u}^{m-1})\bar{u}^{m}\,,\label{eq:PAI-form}
\end{equation}
$i=1,\ldots,n$, where $\bar{u}^{m}$ appears in an affine way. This
PAI-form is closely related to the well-known ruled manifold necessary
condition derived in \cite{Rouchon:1994} for flat continuous-time
systems. Thus, the existence of the decomposed form (\ref{eq:sys_decomposed_1dim})
for flat discrete-time systems can be interpreted as discrete-time
counterpart to the existence of a PAI-form (\ref{eq:PAI-form}) for
flat continuous-time systems.

\section{\label{sec:Algorithm}Calculation of Flat Outputs}

We show in this section that a repeated application of the results
of Section \ref{sec:Decomposition} gives rise to an algorithm, which
allows to check the flatness of a discrete-time system (\ref{eq:sys})
with $\mathrm{rank}(\partial_{u}f)=m$ in at most $n-1$ steps. If
the system is flat, then the algorithm provides a flat output. Otherwise,
it stops and we can conclude that the system is not flat. Roughly
speaking, the idea is as follows: If the system (\ref{eq:sys}) is
flat, then Theorem \ref{thm:decomposition_flat} guarantees that it
can be transformed into the form (\ref{eq:sys_decomposed}) with an
at most $(n-1)$-dimensional subsystem $\bar{x}_{1}^{+}=\bar{f}_{1}(\bar{x}_{1},\bar{x}_{2},\bar{u}_{1})$.
Because of Lemma \ref{lem:basic_decomposition_flat} this subsystem
is also flat, and therefore Theorem \ref{thm:decomposition_flat}
guarantees that the subsystem can again be transformed into the form
(\ref{eq:sys_decomposed}). Repeating this procedure reduces the problem
of checking the flatness of the original system (\ref{eq:sys}) to
the problem of checking the flatness of smaller and smaller subsystems.
Obviously, for a system (\ref{eq:sys}) with $\dim(x)=n$ we can perform
at most $n-1$ such decomposition steps. If in some
step we encounter a subsystem with the same number of input and state
variables, then we can read off a flat output of this subsystem (the
state variables), and the original system is also flat. Otherwise,
if we find a subsystem which does not allow a further decomposition,
then Theorem \ref{thm:decomposition_flat} implies that this subsystem
is not flat. Therefore, the original system (\ref{eq:sys}) cannot
be flat either.

What we have not mentioned in this brief sketch of the basic idea
is the fact that there may appear subsystems with redundant inputs,
i.e., where the Jacobian matrix with respect to the inputs of the
subsystem does not have full rank (see Lemma \ref{lem:basic_decomposition_flat}
and Remark \ref{rem:redundant_inputs_state_variables}). In this
case, we have to eliminate these redundant inputs with a suitable
coordinate transformation, before we can apply Theorem \ref{thm:decomposition_conditions_distribution}
to construct a decomposition of the subsystem.
\begin{rem}
This effect is well-known from static feedback linearization, see
e.g. \cite{NijmeijervanderSchaft:1990}. For instance, if a static
feedback linearizable system is transformed into the form (\ref{eq:sys_decomposed_static_feedback_lin}),
it may happen that $\mathrm{rank}(\partial_{\bar{x}_{2}}\bar{f}_{1})<m$.
\end{rem}

The elimination of redundant inputs is, however, very easy: For a
system (\ref{eq:sys}) with $\mathrm{rank}(\partial_{u}f)=\hat{m}<m$
there always exists an input transformation $(\hat{u},\tilde{u})=\Phi_{u}(x,u)$
with $\dim(\hat{u})=\hat{m}$ that eliminates $m-\hat{m}$ redundant
inputs $\tilde{u}$. If $u=\hat{\Phi}_{u}(x,\hat{u},\tilde{u})$ denotes
the inverse input transformation, then the transformed system is of
the form
\begin{equation}
x^{i,+}=\hat{f}^{i}(x,\hat{u})\,,\quad i=1,\ldots,n\label{eq:redundant_inputs_transformed_sys}
\end{equation}
with
\[
\hat{f}^{i}(x,\hat{u})=f^{i}(x,\hat{\Phi}_{u}(x,\hat{u},\tilde{u}))\,,\quad i=1,\ldots,n
\]
and $\mathrm{rank}(\partial_{\hat{u}}\hat{f})=\hat{m}$. The following
lemma establishes an important connection between a flat output of
the transformed system (\ref{eq:redundant_inputs_transformed_sys})
with $\hat{m}$ inputs and the original system (\ref{eq:sys}) with
$m$ inputs.
\begin{lem}
\label{lem:flatness_redundant_inputs}Consider a system (\ref{eq:sys})
with $\mathrm{rank}(\partial_{u}f)=\hat{m}<m$, and an input transformation
$(\hat{u},\tilde{u})=\Phi_{u}(x,u)$ with $\dim(\hat{u})=\hat{m}$
that eliminates $m-\hat{m}$ redundant inputs $\tilde{u}$. If an
$\hat{m}$-tuple $\hat{y}$ is a flat output of the transformed system
(\ref{eq:redundant_inputs_transformed_sys}) with the $\hat{m}$ inputs
$\hat{u}$, then the $m$-tuple $y=(\hat{y},\tilde{u})$ is a flat
output of the original system (\ref{eq:sys}) with the $m$ inputs
$u$.
\end{lem}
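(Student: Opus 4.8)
The plan is to show that $x$, $u$, and their forward-shifts can all be written as functions of $y = (\hat y, \tilde u)$ and its forward-shifts. First I would observe that $\hat y$ being a flat output of the transformed system \eqref{eq:redundant_inputs_transformed_sys} means that the variables $x$ and $\hat u$ can be expressed as functions of $\hat y$ and its forward-shifts (via a parametrization of the form \eqref{eq:flat_parametrization} written for the $\hat m$-input system). Since $\tilde u$ is, by construction, simply an additional coordinate that was split off by the input transformation $(\hat u, \tilde u) = \Phi_u(x,u)$, the inverse transformation $u = \hat\Phi_u(x,\hat u,\tilde u)$ recovers $u$ from $x$, $\hat u$, and $\tilde u$. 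Substituting the parametrizations of $x$ and $\hat u$ by $\hat y$ and its forward-shifts, together with the fact that $\tilde u$ is a component of the candidate flat output $y$, shows that $u$ is a function of $y$ and its forward-shifts. Thus all $n+m$ coordinate functions $x$ and $u$ are parametrized by $y$ and finitely many of its forward-shifts.

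Next I would verify the converse direction implicitly required by the definition, namely that $y = (\hat y, \tilde u)$ is genuinely expressible in terms of $(x,u,u_{[1]},\ldots)$: the component $\tilde u$ is obtained directly from $\Phi_u(x,u)$, and $\hat y$, being a flat output of \eqref{eq:redundant_inputs_transformed_sys}, is a function of $x$, $\hat u$, and finitely many forward-shifts of $\hat u$; since $\hat u = $ (the $\hat u$-part of $\Phi_u(x,u)$) and the forward-shifts of $\hat u$ are obtained by applying $\delta_{xu}$, each such forward-shift is a function of $x,u,u_{[1]},\ldots$. Hence $y$ has the required form \eqref{eq:flat_output}. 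One should also check the forward-shifts of $\tilde u$ are accounted for: the forward-shifts $\tilde u_{[\alpha]}$ are coordinates that are not forward-shifted by the system map, so they appear on their own among $y_{[\alpha]} = (\hat y_{[\alpha]}, \tilde u_{[\alpha]})$ and are trivially parametrized.

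The only genuine subtlety — and the step I would treat most carefully — is bookkeeping of which forward-shifts of $y$ are needed, i.e., confirming that the parametrization terminates at finite order. This is routine but must be spelled out: if $\hat y$ parametrizes $x$ and $\hat u$ using forward-shifts up to some multi-index, then $u = \hat\Phi_u(x,\hat u,\tilde u)$ needs those same shifts plus the value of $\tilde u$, and the forward-shifted inputs $u_{[\alpha]}$ require correspondingly shifted versions, all of finite order. Since the transformed system is flat by hypothesis, the identity \eqref{eq:sys_identity_y} (or simply the definition) guarantees finiteness on the $\hat y$-side, and appending the finitely many coordinates $\tilde u, \tilde u_{[1]}, \ldots$ does not spoil this. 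I expect no real obstacle here — the lemma is essentially a statement that adjoining ``free'' input coordinates as extra flat-output components is harmless — but the cleanest presentation keeps explicit track of the two pieces $\hat y$ and $\tilde u$ and the role of the inverse input transformation $\hat\Phi_u$.
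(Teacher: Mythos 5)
Your proposal is correct and follows the same route as the paper: express $x$ and $\hat u$ via $\hat y$ and its forward-shifts using flatness of the reduced system, then recover $u$ through the inverse input transformation $u=\hat\Phi_u(x,\hat u,\tilde u)$ with $\tilde u$ supplied as the extra flat-output component. The additional bookkeeping you include (well-definedness of $y$ as a function of $(x,u,u_{[1]},\ldots)$ and finiteness of the shift orders) is sound but is left implicit in the paper's two-line argument.
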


\begin{proof}
Since $\hat{y}$ is a flat output of the transformed system (\ref{eq:redundant_inputs_transformed_sys}),
$x$ and $\hat{u}$ can be expressed as functions of $\hat{y}$ and
its forward-shifts. Because of $y=(\hat{y},\tilde{u})$, the inverse
input transformation $u=\hat{\Phi}_{u}(x,\hat{u},\tilde{u})$ shows
immediately that the input $u$ of the original system (\ref{eq:sys})
can be expressed by $y$ and its forward-shifts.
\end{proof}
Thus, eliminated redundant inputs are candidates for components of
a flat output.

Now we can describe the algorithm in detail. To enhance
the readability, every step is divided into three subtasks: (A) checking
whether the (sub-)system is flat by a simple dimension argument, (B)
checking whether a decomposition is possible, (C) performing the decomposition.
For the decompositions, we use the more general formulation of Theorem
\ref{thm:decomposition_conditions_distribution} with distributions,
instead of the 1-dimensional special case of Corollary \ref{cor:decomposition_conditions_vectorfield}
with vector fields. To keep the notation as simple
as possible, after every decomposition step the state and input of
the remaining subsystem are renamed again as $x$ and $u$.\addtocounter{thm}{1}

\textbf{Algorithm \thethm}\emph{
Start with the original system (\ref{eq:sys}) and the first decomposition
step with $k=1$. }\emph{We assume that the original
system meets $\mathrm{rank}(\partial_{u}f)=m$, i.e., has no redundant
inputs.}

\textbf{\emph{Decomposition Step $k\geq1$:}}
\begin{enumerate}
\item[(A)] \emph{If $\dim(x)>\dim(u)$, go
to (B). Otherwise, $y_{k}=x$ is a flat output of the system considered
in the $k$-th decomposition step. A flat output of the original system
is given by $y=(y_{k},\ldots,y_{1})$. This follows immediately from
a $(k-1)$-fold application of Lemma~\ref{lem:basic_decomposition_flat}
and Lemma \ref{lem:flatness_redundant_inputs}.}
\item[(B)] \emph{Transform the input vector
fields $\partial_{u}$ into adapted coordinates (\ref{eq:adapted_coordinates}),
and check whether there exists a projectable and involutive subdistribution
$D\subset\mathrm{span}\{\partial_{u}\}$. In case of a positive result,
go to (C). In case of a negative result, according to Theorem \ref{thm:decomposition_flat}
the system considered in the $k$-th decomposition step is not flat.}\footnote{In fact, the formulation of Theorem \ref{thm:decomposition_flat}
guarantees already the existence of a decomposition for flat systems.
However, according to Theorem \ref{thm:decomposition_conditions_distribution},
this is equivalent to the existence of an at least 1-dimensional projectable
and involutive subdistribution $D\subset\mathrm{span}\{\partial_{u}\}$.}\emph{ By a $(k-1)$-fold application of Lemma \ref{lem:basic_decomposition_flat},
the original system cannot be flat either.}
\item[(C)] \emph{Straighten out $D$ by an
input transformation (\ref{eq:decompostion_input_transformation}),
and the pushforward $f_{*}D$ by a state transformation (\ref{eq:decompostion_state_transformation}).
In new coordinates, the system is of the form (\ref{eq:sys_decomposed}).
Now consider the subsystem}
\begin{equation}
\bar{x}_{1}^{+}=\bar{f}_{1}(\bar{x}_{1},\bar{x}_{2},\bar{u}_{1})\label{eq:algorithm_subsys_bar}
\end{equation}
\emph{with the inputs $(\bar{x}_{2},\bar{u}_{1})$,
and eliminate redundant inputs by a coordinate transformation
\[
(\hat{z},y_{k})=\Phi_{u}(\bar{x}_{1},\bar{x}_{2},\bar{u}_{1})
\]
with $\dim(\hat{z})=\mathrm{rank}(\partial_{(\bar{x}_{2},\bar{u}_{1})}\bar{f}_{1})$,
such that the transformed system}
\begin{equation}
\bar{x}_{1}^{+}=\hat{f}_{1}(\bar{x}_{1},\hat{z})\label{eq:algorithm_subsys_bar_hat}
\end{equation}
\emph{does not depend on $y_{k}$.}\footnote{Note that, as discussed in Lemma \ref{lem:flatness_redundant_inputs},
adding $y_{k}$ to a flat output of (\ref{eq:algorithm_subsys_bar_hat})
yields a flat output of (\ref{eq:algorithm_subsys_bar}).}\emph{ According to Remark \ref{rem:redundant_inputs_state_variables},
because of $\mathrm{rank}(\partial_{\bar{u}_{1}}\bar{f}_{1})=\dim(\bar{u}_{1})$
the components of $y_{k}$ can always be found among the state variables
$\bar{x}_{2}$. In the case $\mathrm{rank}(\partial_{(\bar{x}_{2},\bar{u}_{1})}\bar{f}_{1})=\dim(\bar{x}_{2})+\dim(\bar{u}_{1})$,
$y_{k}$ is empty. Finally, rename the subsystem (\ref{eq:algorithm_subsys_bar_hat})
as $x^{+}=f(x,u)$ with
\[
x=\bar{x}_{1}\,,\quad u=\hat{z}\,,\quad f=\hat{f}_{1}\,,
\]
and proceed with item (A) of the next decomposition step.}
\end{enumerate}
In the following, we want to discuss computational
aspects of the algorithm. First, checking the existence of a projectable
and involutive subdistribution $D\subset\mathrm{span}\{\partial_{u}\}$
in item (B) requires only the solution of algebraic equations: To
transform the input vector fields $\partial_{u}$ into adapted coordinates
(\ref{eq:adapted_coordinates}), according to the transformation law
for vector fields we also need the inverse of (\ref{eq:adapted_coordinates}).
In adapted coordinates, the input vector fields $\partial_{u}$ are
of the form (\ref{eq:f-related_v_adapt}). Then we only have to check
if there exists at least one linear combination (with coefficients
that may depend on $x^{+}$ and $\xi$) which is of the form
\begin{equation}
a^{i}(x^{+})\partial_{x^{i,+}}+b^{j}(x^{+},\xi)\partial_{\xi^{j}}\,,\label{eq:projectable_vector_field}
\end{equation}
i.e., projectable. For further computational details
see the appendix. Every such vector field spans
a 1-dimensional (and thus involutive) projectable subdistribution
$D\subset\mathrm{span}\{\partial_{u}\}$. The vector field (\ref{eq:projectable_vector_field})
in coordinates $(x,u)$ follows from the inverse of (\ref{eq:adapted_coordinates}).
\begin{rem}
\label{rem:decomposition_not_unique}Note that in
general the choice of an involutive distribution $D$ that meets the
conditions of Theorem \ref{thm:decomposition_conditions_distribution},
or equivalently a pair of vector fields that meets the conditions
of Corollary \ref{cor:decomposition_conditions_vectorfield}, is not
unique. Thus, the decomposition (\ref{eq:sys_decomposed}) is not
uniquely determined.
\end{rem}

Second, in item (C) the distribution $D$ and its
pushforward $f_{*}D$ have to be straightened out by an input transformation
and a state transformation. Since straightening out involutive distributions
by the Frobenius theorem requires the solution of (nonlinear) ODEs,
this task is typically considerably more difficult than the construction
of the distributions in item (B). However, for the calculation of
a linearizing output of a static feedback linearizable system it is
also necessary to straighten out a sequence of distributions by the
Frobenius theorem. Thus, from a computational point of view, the construction
of a flat output is essentially of the same complexity as the construction
of a linearizing output of a static feedback linearizable system.
The main difference is that we have to solve additionally algebraic
equations to determine a suitable subdistribution $D\subset\mathrm{span}\{\partial_{u}\}$,
whereas in the static feedback linearization problem we always work
with the complete distribution $D=\mathrm{span}\{\partial_{u}\}$.

The algorithm is in fact a generalization of the transformation of
static feedback linearizable systems into a triangular form which
is discussed in \cite{NijmeijervanderSchaft:1990}. The transformation
into this triangular form can be interpreted as a repeated application
of the decomposition (\ref{eq:sys_decomposed_static_feedback_lin}),
and yields a linearizing output.\footnote{Note that in \cite{NijmeijervanderSchaft:1990} all decomposition
steps are combined in one coordinate transformation, which is obtained
by straightening out a nested sequence of involutive distributions.} For the calculation of flat outputs, we simply have to replace the
decomposition (\ref{eq:sys_decomposed_static_feedback_lin}) by the
more general decomposition (\ref{eq:sys_decomposed}). However, it
is important to emphasize that the decompositions we perform in each
of the steps are typically not unique, and that different decompositions
might lead to different flat outputs. This is in accordance with the
fact that flat outputs (of multi-input systems) are never unique.
It is also obvious that every flat output which is obtained by the
suggested algorithm can only depend on $x$ and $u$ but not on forward-shifts
of $u$. This is a simple consequence of the fact
that we do not introduce any additional variables. Since the algorithm
yields (in principle) a flat output for every flat discrete-time system,
we can conclude that every flat discrete-time system must have a flat
output which only depends on $x$ and $u$. By a closer inspection,
we get an even stronger result.
\begin{thm}
\label{thm:x-flat}Every flat discrete-time system (\ref{eq:sys})
with $\mathrm{rank}(\partial_{u}f)=m$ has a flat output of the form
$y=\varphi(x)$, which is independent of the input $u$ and its forward-shifts.
\end{thm}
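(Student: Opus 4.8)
The plan is to derive the statement by a careful inspection of the flat output that the algorithm described above returns when it is applied to a flat system, keeping track at every decomposition step of the variables on which the partial flat output $y_k$ depends. The key observation will be that each $y_k$ is, by construction, a function of the state of the subsystem entering step $k$, and that this state is in turn a function of the original state $x$ only. Once both facts are in place, the concatenated flat output $y=(y_K,\ldots,y_1)$ produced by the algorithm automatically has the desired form $y=\varphi(x)$, and its flatness for the original system (\ref{eq:sys}) is already guaranteed by the repeated application of Lemma~\ref{lem:basic_decomposition_flat} and Lemma~\ref{lem:flatness_redundant_inputs} on which the algorithm rests.

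First I would fix notation: denote by $x^{(k)}$ and $u^{(k)}$ the state and input of the subsystem considered in decomposition step $k$, with $x^{(1)}=x$, $u^{(1)}=u$, and let $K$ be the step at which the algorithm terminates in item~(A). Then I would prove, by induction on $k$, the invariant that $x^{(k)}$ is a function of the original state $x$ alone. The base case $k=1$ is trivial, and the induction step is immediate from item~(C): the state transformation (\ref{eq:decompostion_state_transformation}) applied there depends on $x^{(k)}$ only (state transformations never involve the inputs), and the state $x^{(k+1)}=\bar{x}_1$ passed on to the next step consists of components of that transformed state, while the subsequent elimination of redundant inputs affects only the inputs.

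Next I would show that every partial output $y_k$ is a function of $x^{(k)}$. For $k<K$, the tuple $y_k$ collects the redundant inputs of the subsystem (\ref{eq:algorithm_subsys_bar}) that are eliminated in step $k$; here I would invoke Remark~\ref{rem:redundant_inputs_state_variables}, which applies because $\mathrm{rank}(\partial_{\bar{u}_1}\bar{f}_1)=\dim(\bar{u}_1)$ in every step, to conclude that these redundant inputs can always be chosen among the state variables $\bar{x}_2$, hence depend on $x^{(k)}$ only (and if $y_k$ is empty there is nothing to prove). For $k=K$ the algorithm stops in item~(A) with $\dim(x^{(K)})\le\dim(u^{(K)})$; since the elimination of redundant inputs forces $\mathrm{rank}(\partial_{u^{(K)}}f^{(K)})=\dim(u^{(K)})\le\dim(x^{(K)})$, equality must hold and $y_K=x^{(K)}$ is a (linearizing) flat output of the terminal subsystem. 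Combining this with the invariant above, every $y_k$ — and therefore the whole flat output $y=(y_K,\ldots,y_1)$ — is a function of $x$ alone.

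I do not expect a genuine obstacle: the argument is bookkeeping rather than a new idea. The single delicate point — and the only place where the conclusion could in principle fail — is the appeal to Remark~\ref{rem:redundant_inputs_state_variables}: it is essential that the redundant inputs of the subsystem (\ref{eq:basic_decomposition_flat_subsys}) always sit among the state variables $\bar{x}_2$ and never among the genuine controls $\bar{u}_1$, since the latter are functions of the original input $u$. I would therefore be careful to record explicitly, in each decomposition step, why $\mathrm{rank}(\partial_{\bar{u}_1}\bar{f}_1)=\dim(\bar{u}_1)$ — this is exactly the rank identity underlying Remark~\ref{rem:redundant_inputs_state_variables} — so that the remark is legitimately applicable throughout the recursion.
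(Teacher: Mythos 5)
Your proposal is correct and follows essentially the same route as the paper's own proof: it traces the output $y=(y_{\bar k},\ldots,y_1)$ produced by the algorithm and uses Remark~\ref{rem:redundant_inputs_state_variables} to ensure that every eliminated redundant input lies among the (transformed) state variables, so that nothing ever depends on $u$ or its shifts. The only difference is presentational --- you make explicit the induction showing that each subsystem's state is a function of the original state $x$, which the paper leaves implicit.
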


\begin{proof}
Suppose the algorithm terminates after $k=\bar{k}$
steps. Then the constructed flat output is of the form
\[
y=(y_{\bar{k}},\ldots,y_{1})\,,
\]
where $y_{\bar{k}}$ consists of the state variables of the last subsystem,
and $y_{\bar{k}-1},\ldots,y_{1}$ are eliminated redundant inputs
of the subsystems constructed in the first $\bar{k}-1$ decomposition
steps. Thus, input variables of the original system could only appear
in the components $y_{\bar{k}-1},\ldots,y_{1}$. However, as discussed
in Remark \ref{rem:redundant_inputs_state_variables} and item (C)
of the algorithm, due to the full rank of the Jacobian matrix $\partial_{u}f$
the redundant inputs of the subsystems (\ref{eq:algorithm_subsys_bar})
can always be found among the (transformed) state variables of the
complete system. Thus, the flat output depends indeed only on the
state variables.
\end{proof}
This result is also remarkable, since Theorem \ref{thm:x-flat} does
not have a counterpart for flat continuous-time systems.

\section{\label{sec:Examples}Examples}

In this section, we illustrate our results with two
examples.

\subsection{An Academic Example}

In the following, we demonstrate the algorithm for the calculation
of flat outputs with the system
\begin{equation}
\begin{array}{lcl}
x^{1,+} & = & \tfrac{x^{2}+x^{3}+3x^{4}}{u^{1}+2u^{2}+1}\\
x^{2,+} & = & x^{1}(x^{3}+1)(u^{1}+2u^{2}-3)+x^{4}-3u^{2}\\
x^{3,+} & = & u^{1}+2u^{2}\\
x^{4,+} & = & x^{1}(x^{3}+1)+u^{2}\,.
\end{array}\label{eq:Example1_sys}
\end{equation}
All coordinate transformations that we will perform
are defined in a neighborhood of the equilibrium $(x_{0},u_{0})=(0,0)$.

In the first step, we have to check the existence of a projectable
involutive subdistribution $D\subset\mathrm{span}\{\partial_{u}\}$.
For this purpose, we introduce adapted coordinates (\ref{eq:adapted_coordinates})
on $\mathcal{X}\times\mathcal{U}$. After the transformation
\[
\begin{array}{ccl}
x^{1,+} & = & f^{1}(x,u)\\
x^{2,+} & = & f^{2}(x,u)\\
x^{3,+} & = & f^{3}(x,u)\\
x^{4,+} & = & f^{4}(x,u)\\
\xi^{1} & = & x^{1}\\
\xi^{2} & = & x^{3}\,,
\end{array}
\]
the vector fields $\partial_{u^{1}}$ and $\partial_{u^{2}}$ are
given by
\[
-\tfrac{x^{1,+}}{x^{3,+}+1}\partial_{x^{1,+}}+\xi^{1}(\xi^{2}+1)\partial_{x^{2,+}}+\partial_{x^{3,+}}
\]
and
\[
-2\tfrac{x^{1,+}}{x^{3,+}+1}\partial_{x^{1,+}}+(2\xi^{1}(\xi^{2}+1)-3)\partial_{x^{2,+}}+2\partial_{x^{3,+}}+\partial_{x^{4,+}}\,.
\]
Because of the presence of the fibre coordinates $\xi^{1}$ and $\xi^{2}$,
neither $\partial_{u^{1}}$ nor $\partial_{u^{2}}$ itself is projectable.
However, the linear combination $-2\partial_{u^{1}}+\partial_{u^{2}}$
reads in adapted coordinates as
\[
-3\partial_{x^{2,+}}+\partial_{x^{4,+}}
\]
and is hence a projectable vector field. Since there is no other
possibility (besides a scaling), the distribution
$D=\mathrm{span}\{-2\partial_{u^{1}}+\partial_{u^{2}}\}$ is uniquely
determined, and the pushforward yields $f_{*}D=\mathrm{span}\{-3\partial_{x^{2,+}}+\partial_{x^{4,+}}\}$.
Because of $\dim(D)<2$, the system (\ref{eq:Example1_sys}) cannot
be static feedback linearizable. Now we straighten out the involutive
distributions $D$ and $f_{*}D$ by input- and state transformations.
The input transformation
\[
\begin{array}{ccl}
\bar{u}^{1} & = & u^{1}+2u^{2}\\
\bar{u}^{2} & = & u^{2}
\end{array}
\]
gives $D=\mathrm{span}\{\partial_{\bar{u}^{2}}\}$, and the state
transformation
\[
\begin{array}{ccl}
\bar{x}^{1} & = & x^{1}\\
\bar{x}^{2} & = & x^{2}+3x^{4}\\
\bar{x}^{3} & = & x^{3}\\
\bar{x}^{4} & = & x^{4}
\end{array}
\]
gives $f_{*}D=\mathrm{span}\{\partial_{\bar{x}^{4,+}}\}$. Accordingly,
the transformed system reads
\[
\begin{array}{lcl}
\bar{x}^{1,+} & = & \tfrac{\bar{x}^{2}+\bar{x}^{3}}{\bar{u}^{1}+1}\\
\bar{x}^{2,+} & = & \bar{x}^{1}(\bar{x}^{3}+1)\bar{u}^{1}+\bar{x}^{4}\\
\bar{x}^{3,+} & = & \bar{u}^{1}\\
\bar{x}^{4,+} & = & \bar{x}^{1}(\bar{x}^{3}+1)+\bar{u}^{2}\,,
\end{array}
\]
where the first three equations are independent of $\bar{u}^{2}$.
Now consider the subsystem
\begin{equation}
\begin{array}{lcl}
\bar{x}^{1,+} & = & \tfrac{\bar{x}^{2}+\bar{x}^{3}}{\bar{u}^{1}+1}\\
\bar{x}^{2,+} & = & \bar{x}^{1}(\bar{x}^{3}+1)\bar{u}^{1}+\bar{x}^{4}\\
\bar{x}^{3,+} & = & \bar{u}^{1}
\end{array}\label{eq:Example1_subsys1}
\end{equation}
with the inputs $(\bar{x}^{4},\bar{u}^{1})$. Because of $\mathrm{rank}(\partial_{(\bar{x}^{4},\bar{u}^{1})}\bar{f})=2$,
where by $\bar{f}$ we refer to the system (\ref{eq:Example1_subsys1}),
there are no redundant inputs. Thus, $y_{1}=\{\}$
is empty.

In the second step, the complete input distribution 
of the system (\ref{eq:Example1_subsys1}) is projectable
(this can be verified again by introducing adapted coordinates) and
we can choose $D=\mathrm{span}\{\partial_{\bar{x}^{4}},\partial_{\bar{u}^{1}}\}$,
which is clearly involutive. Since this distribution is already straightened
out, we need no input transformation, i.e., we can simply set
\[
\begin{array}{ccl}
\bar{\bar{x}}^{4} & = & \bar{x}^{4}\\
\bar{\bar{u}}^{1} & = & \bar{u}^{1}\,.
\end{array}
\]
The pushforward of $D$ is given by
\[
\bar{f}_{*}D=\mathrm{span}\{\partial_{\bar{x}^{2,+}},-\tfrac{\bar{x}^{1,+}}{\bar{x}^{3,+}+1}\partial_{\bar{x}^{1,+}}+\partial_{\bar{x}^{3,+}}\}\,,
\]
and the state transformation
\[
\begin{array}{ccl}
\bar{\bar{x}}^{1} & = & \bar{x}^{1}(\bar{x}^{3}+1)\\
\bar{\bar{x}}^{2} & = & \bar{x}^{2}\\
\bar{\bar{x}}^{3} & = & \bar{x}^{3}
\end{array}
\]
yields $\bar{f}_{*}D=\mathrm{span}\{\partial_{\bar{\bar{x}}^{2,+}},\partial_{\bar{\bar{x}}^{3,+}}\}$.
In new coordinates, the system reads
\[
\begin{array}{lcl}
\bar{\bar{x}}^{1,+} & = & \bar{\bar{x}}^{2}+\bar{\bar{x}}^{3}\\
\bar{\bar{x}}^{2,+} & = & \bar{\bar{x}}^{1}\bar{\bar{u}}^{1}+\bar{\bar{x}}^{4}\\
\bar{\bar{x}}^{3,+} & = & \bar{\bar{u}}^{1}\,,
\end{array}
\]
where the first line is independent of both inputs $\bar{\bar{x}}^{4}$
and $\bar{\bar{u}}^{1}$. The subsystem
\begin{equation}
\begin{array}{lcl}
\bar{\bar{x}}^{1,+} & = & \bar{\bar{x}}^{2}+\bar{\bar{x}}^{3}\end{array}\label{eq:Example1_subsys2}
\end{equation}
with the inputs $(\bar{\bar{x}}^{2},\bar{\bar{x}}^{3})$
meets $\mathrm{rank}(\partial_{(\bar{\bar{x}}^{2},\bar{\bar{x}}^{3})}\bar{\bar{f}})=1<2$.
Thus, there exists a redundant input. The elimination
of a redundant input is obviously not unique. Possible choices are
e.g. the transformations
\[
\begin{array}{ccl}
\hat{z} & = & \bar{\bar{x}}^{2}+\bar{\bar{x}}^{3}\\
y_{2} & = & \bar{\bar{x}}^{2}
\end{array}
\]
or
\[
\begin{array}{ccl}
\hat{z} & = & \bar{\bar{x}}^{2}+\bar{\bar{x}}^{3}\\
y_{2} & = & \bar{\bar{x}}^{3}\,.
\end{array}
\]
In both cases, the transformed system (\ref{eq:Example1_subsys2})
reads
\begin{equation}
\begin{array}{lcl}
\bar{\bar{x}}^{1,+} & = & \hat{z}\end{array}\,.\label{eq:Example1_subsys2_elim}
\end{equation}

In the third step, we finally have a system with
the same number of input- and state variables. Thus, a flat output
of (\ref{eq:Example1_subsys2_elim}) is given by $y_{3}=\bar{\bar{x}}^{1}$.
Adding the redundant input $y_{2}$ yields a flat output of (\ref{eq:Example1_subsys2}),
which is also a flat output of the complete system (\ref{eq:Example1_sys}).
In original coordinates, the flat output $y=(\bar{\bar{x}}^{1},\bar{\bar{x}}^{2})$
is given by $y=(x^{1}(x^{3}+1),\,x^{2}+3x^{4})$, and the flat output
$y=(\bar{\bar{x}}^{1},\bar{\bar{x}}^{3})$ is given by $y=(x^{1}(x^{3}+1),\,x^{3})$.

For the flat output
\[
y=(x^{1}(x^{3}+1),\,x^{2}+3x^{4})\,,
\]
the map (\ref{eq:flat_parametrization}) is given by
\[
\begin{array}{ccl}
x^{1} & = & \tfrac{y^{1}}{y_{[1]}^{1}-y^{2}+1}\\
x^{2} & = & 3y^{1}(y_{[2]}^{1}-y_{[1]}^{2})+y^{2}-3y_{[1]}^{2}\\
x^{3} & = & y_{[1]}^{1}-y^{2}\\
x^{4} & = & y^{1}(y_{[1]}^{2}-y_{[2]}^{1})+y_{[1]}^{2}\\
u^{1} & = & 2y^{1}+2y_{[1]}^{1}(y_{[3]}^{1}-y_{[2]}^{2})+y_{[2]}^{1}-y_{[1]}^{2}-2y_{[2]}^{2}\\
u^{2} & = & -y^{1}+y_{[1]}^{1}(y_{[2]}^{2}-y_{[3]}^{1})+y_{[2]}^{2}\,.
\end{array}
\]
That is, there appear forward-shifts of $y^{1}$ and $y^{2}$ up to
the orders $r_{1}=3$ and $r_{2}=2$. In the following, we shall use
this example to illustrate the method that we have applied in the
proof of Theorem \ref{thm:decomposition_flat} to show that every
flat system allows a decomposition (\ref{eq:sys_decomposed_1dim}).
Since the $1$-dimensional distributions $D$ and $f_{*}D$ in the
first decomposition step of the system (\ref{eq:Example1_sys}) are
unique, the method of Theorem \ref{thm:decomposition_flat} must yield
exactly the same decomposition. Substituting (\ref{eq:flat_parametrization_inverse})
into
\[
\partial_{y_{[2]}^{1}}F_{x}^{i}\,,\quad i=1,\ldots,4
\]
and
\[
\partial_{y_{[3]}^{1}}F_{u}^{j}\,,\quad j=1,2
\]
yields
\[
\begin{array}{ccl}
\tilde{w}^{1} & = & 0\\
\tilde{w}^{2} & = & 3x^{1}(x^{3}+1)\\
\tilde{w}^{3} & = & 0\\
\tilde{w}^{4} & = & -x^{1}(x^{3}+1)
\end{array}
\]
and
\[
\begin{array}{ccl}
\tilde{v}^{1} & = & 2(x^{2}+x^{3}+3x^{4})\\
\tilde{v}^{2} & = & -(x^{2}+x^{3}+3x^{4})\,.
\end{array}
\]
Since the functions $\tilde{w}$ are independent of $u$ and the functions
$\tilde{v}$ are independent of forward-shifts of $u$, we directly
get $w^{i}=\tilde{w}^{i}$ and $v^{j}=\tilde{v}^{j}$. It can be checked
easily that the condition (\ref{eq:decomposition_conditions_f-relatedness})
is indeed satisfied. Therefore, the pair of vector fields
\[
v=2(x^{2}+x^{3}+3x^{4})\partial_{u^{1}}-(x^{2}+x^{3}+3x^{4})\partial_{u^{2}}
\]
and
\[
w=3x^{1,+}(x^{3,+}+1)\partial_{x^{2,+}}-x^{1,+}(x^{3,+}+1)\partial_{x^{4,+}}
\]
is $f$-related. Because of
\begin{equation}
\mathrm{span}\{v\}=\mathrm{span}\{-2\partial_{u^{1}}+\partial_{u^{2}}\}=D\label{eq:Example1_F_D}
\end{equation}
and
\begin{equation}
\mathrm{span}\{w\}=\mathrm{span}\{-3\partial_{x^{2,+}}+\partial_{x^{4,+}}\}=f_{*}D\,,\label{eq:Example1_F_fstarD}
\end{equation}
these vector fields span exactly the same distributions that we have
constructed in the first decomposition step in adapted coordinates.
\begin{rem}
It should be noted that the case $w^{i}=\tilde{w}^{i}$ and $v^{j}=\tilde{v}^{j}$
is a special one and does not hold in general. With the more sophisticated
flat output $y=(x^{1}(x^{3}+1)+e^{u^{1}+2u^{2}},x^{3})$, we would
get functions $\tilde{w}^{i}$ and $\tilde{v}^{j}$ that also depend
on forward-shifts of $u$. After setting these forward-shifts to constant
values as shown in (\ref{eq:w_wtilde}) and (\ref{eq:v_vtilde}),
the resulting $f$-related vector fields
\[
v=e^{c_{3}^{1}+2c_{3}^{2}}(2\partial_{u^{1}}-\partial_{u^{2}})
\]
and
\[
w=e^{c_{3}^{1}+2c_{3}^{2}}(3\partial_{x^{2,+}}-\partial_{x^{4,+}})
\]
span again the same distributions (\ref{eq:Example1_F_D}) and (\ref{eq:Example1_F_fstarD}),
independent of the chosen values $c_{3}^{1}$ and $c_{3}^{2}$. This
is a consequence of the fact that system (\ref{eq:Example1_sys})
possesses only a 1-dimensional projectable subdistribution $D\subset\mathrm{span}\{\partial_{u}\}$.
\end{rem}

\subsection{A Wheeled Mobile Robot}

As a second example, we consider the exact discretization
of the kinematic model
\begin{equation}
\begin{array}{ccl}
\dot{x}^{1} & = & \sin(x^{3})u^{1}\\
\dot{x}^{2} & = & \cos(x^{3})u^{1}\\
\dot{x}^{3} & = & u^{2}
\end{array}\label{eq:WMR}
\end{equation}
of a wheeled mobile robot, which is also discussed in the context
of dynamic feedback linearization in \cite{Aranda-BricaireMoog:2008}.
The variables $x^{1}$ and $x^{2}$ describe the position of the center
of the axle, and $x^{3}$ its orientation. The control inputs are
the translatory velocity $u^{1}$ and the angular velocity $u^{2}$.
It is well-known that the continuous-time system (\ref{eq:WMR}) is
flat, and a flat output is given by $y=(x^{1},x^{2})$, i.e., by the
position of the axle.

With the assumption that the inputs $u^{1}$ and
$u^{2}$ are constant between sampling instants, the system (\ref{eq:WMR})
can be solved analytically, and an exact discrete-time model is given
by
\begin{equation}
\begin{array}{ccl}
x^{1,+} & = & x^{1}+2u^{1}\psi(u^{2})\cos(\gamma(x^{3},u^{2}))\\
x^{2,+} & = & x^{2}+2u^{1}\psi(u^{2})\sin(\gamma(x^{3},u^{2}))\\
x^{3,+} & = & x^{3}+Tu^{2}
\end{array}\label{eq:WMR_exact}
\end{equation}
with
\[
\psi(u^{2})=\begin{cases}
\tfrac{\sin\left(\tfrac{T}{2}u^{2}\right)}{u^{2}}\,, & \text{if}\:u^{2}\neq0\\
\tfrac{T}{2}\,, & \text{if}\:u^{2}=0
\end{cases}
\]
and
\[
\gamma(x^{3},u^{2})=x^{3}+\tfrac{T}{2}u^{2}\,,
\]
see \cite{Aranda-BricaireMoog:2008}. As shown in \cite{Aranda-BricaireMoog:2008},
the system (\ref{eq:WMR_exact}) can be linearized by an exogenous
dynamic feedback. In the following, we prove that the system is not
flat, and can thus indeed not be linearized by an endogenous dynamic
feedback. Before we apply our algorithm, we perform the input transformation
\[
\begin{array}{ccl}
\bar{u}^{1} & = & 2u^{1}\psi(u^{2})\\
\bar{u}^{2} & = & \gamma(x^{3},u^{2})
\end{array}
\]
to obtain the simpler system representation
\[
\begin{array}{ccl}
x^{1,+} & = & x^{1}+\bar{u}^{1}\cos(\bar{u}^{2})\\
x^{2,+} & = & x^{2}+\bar{u}^{1}\sin(\bar{u}^{2})\\
x^{3,+} & = & -x^{3}+2\bar{u}^{2}\,.
\end{array}
\]
Now let us check the existence of a projectable involutive subdistribution
$D\subset\mathrm{span}\{\partial_{\bar{u}}\}$. After introducing
adapted coordinates
\[
\begin{array}{ccl}
x^{1,+} & = & x^{1}+\bar{u}^{1}\cos(\bar{u}^{2})\\
x^{2,+} & = & x^{2}+\bar{u}^{1}\sin(\bar{u}^{2})\\
x^{3,+} & = & -x^{3}+2\bar{u}^{2}\\
\xi^{1} & = & x^{3}\\
\xi^{2} & = & \bar{u}^{1}
\end{array}
\]
on $\mathcal{X}\times\mathcal{U}$, the vector fields $\partial_{\bar{u}^{1}}$
and $\partial_{\bar{u}^{2}}$ are given by
\[
\cos\left(\tfrac{x^{3,+}+\xi^{1}}{2}\right)\partial_{x^{1,+}}+\sin\left(\tfrac{x^{3,+}+\xi^{1}}{2}\right)\partial_{x^{2,+}}+\partial_{\xi^{2}}
\]
and
\[
-\xi^{2}\sin\left(\tfrac{x^{3,+}+\xi^{1}}{2}\right)\partial_{x^{1,+}}+\xi^{2}\cos\left(\tfrac{x^{3,+}+\xi^{1}}{2}\right)\partial_{x^{2,+}}+2\partial_{x^{3,+}}\,.
\]
With a normalized basis of the form (\ref{eq:basis_normed}),
it can be observed that there does not exist any projectable linear
combination of these vector fields (there does not exist any linear
combination where the coefficients of $\partial_{x^{1,+}}$, $\partial_{x^{2,+}}$
and $\partial_{x^{3,+}}$ are independent of $\xi^{1}$ and $\xi^{2}$).
Thus, the algorithm stops already in the first step with a negative
result. Since every flat discrete-time system possesses an at least
1-dimensional projectable subdistribution $D\subset\mathrm{span}\{\partial_{u}\}$,
the exact discretization (\ref{eq:WMR_exact}) of the wheeled mobile
robot (\ref{eq:WMR}) is not flat.

An Euler discretization, in contrast, would preserve
the flatness and even the flat output $y=(x^{1},x^{2})$ of the continuous-time
system (\ref{eq:WMR}). In fact, the Euler discretization
\[
\begin{array}{ccl}
x^{1,+} & = & x^{1}+T\sin(x^{3})u^{1}\\
x^{2,+} & = & x^{2}+T\cos(x^{3})u^{1}\\
x^{3,+} & = & x^{3}+Tu^{2}
\end{array}
\]
is already in the decomposed form (\ref{eq:basic_decomposition_flat})
with $m_{2}=1$. Thus, the flat output $y=(x^{1},x^{2})$ can be read
off directly from the system equations. However, it is important to
emphasize that in general also an Euler discretization does not necessarily
preserve the flatness of continuous-time systems. In case of the mobile
robot (\ref{eq:WMR}), the flatness and the particular flat output
are preserved because of the special triangular structure of the system.

\section{Conclusion}

We have shown that every flat discrete-time system can be decomposed
by state- and input transformations into a subsystem and an endogenous
dynamic feedback. This remarkable feature can be considered as discrete-time
counterpart to the existence of a PAI-form (\ref{eq:PAI-form}) for
flat continuous-time systems, which is closely related to the well-known
ruled-manifold necessary condition. In contrast to the PAI-form or
the ruled-manifold criterion, such a decomposition directly gives
rise to an algorithm which allows to check the flatness of a discrete-time
system in at most $n-1$ steps. If the system is
flat, then the algorithm yields a flat output which only depends on
the state variables. Consequently, every flat discrete-time system
has a flat output which does not depend on the inputs and their forward-shifts.
Compared to the complexity of the flatness problem in the continuous-time
case, these results represent a fundamental simplification.
From a computational point of view, it would nevertheless be desirable
to avoid the coordinate transformations that have to be performed
in each of the steps. Thus, current research is concerned
with the development of a coordinate-independent test for flatness.
More precisely, the idea is to separate the test for flatness from
the calculation of a flat output, similar to the test for static feedback
linearizability. Furthermore, motivated by the existence of flat outputs
which only depend on the state variables, future work will address
the question whether there exist suitable normal forms for flat discrete-time
systems.

\begin{ack}                               
The first author and the third author have been supported by the Austrian Science Fund (FWF) under grant number P 29964 and P 32151.  
\end{ack}

\appendix

\section{Appendix}

The purpose of this section is to illustrate a computationally
efficient construction of projectable linear combinations of the input
vector fields $\partial_{u}$.

In adapted coordinates (\ref{eq:adapted_coordinates}),
the $m$ input vector fields $\partial_{u}$ are of the form 
\begin{align}
v_{1} & =a_{1}^{1}(x^{+},\xi)\partial_{x^{1,+}}+\ldots+a_{1}^{n}(x^{+},\xi)\partial_{x^{n,+}}\nonumber \\
 & \hphantom{=}+b_{1}^{1}(x^{+},\xi)\partial_{\xi^{1}}+\ldots+b_{1}^{m}(x^{+},\xi)\partial_{\xi^{m}}\nonumber \\
 & \vdots\label{eq:basis_general}\\
v_{m} & =a_{m}^{1}(x^{+},\xi)\partial_{x^{1,+}}+\ldots+a_{m}^{n}(x^{+},\xi)\partial_{x^{n,+}}\nonumber \\
 & \hphantom{\phantom{=}}+b_{m}^{1}(x^{+},\xi)\partial_{\xi^{1}}+\ldots+b_{m}^{m}(x^{+},\xi)\partial_{\xi^{m}}\,.\nonumber 
\end{align}
Now we have to check whether there exists a linear combination
\begin{equation}
c^{1}(x^{+},\xi)v_{1}+\ldots+c^{m}(x^{+},\xi)v_{m}\label{eq:linear_combination}
\end{equation}
which is of the form
\[
a^{i}(x^{+})\partial_{x^{i,+}}+b^{j}(x^{+},\xi)\partial_{\xi^{j}}\,,
\]
i.e., projectable. The criterion is that the resulting coefficients
$c^{k}(x^{+},\xi)a_{k}^{i}(x^{+},\xi)$ of the linear combination
(\ref{eq:linear_combination}) in the directions $\partial_{x^{i,+}}$,
$i=1,\ldots,n$ must be independent of $\xi$, i.e., they must satisfy
\begin{equation}
\partial_{\xi^{j}}\left(c^{k}(x^{+},\xi)a_{k}^{i}(x^{+},\xi)\right)=0\label{eq:conditions_general}
\end{equation}
for all $i=1,\ldots,n$ and $j=1,\ldots,m$. To avoid the partial
derivatives of the unknown coefficients $c^{k}(x^{+},\xi)$, it is
beneficial to use a normalized basis
\begin{align}
v_{1} & =\partial_{x^{1,+}}\nonumber \\
 & \hphantom{\phantom{=}}+a_{1}^{m+1}(x^{+},\xi)\partial_{x^{m+1,+}}+\ldots+a_{1}^{n}(x^{+},\xi)\partial_{x^{n,+}}\nonumber \\
 & \hphantom{\phantom{=}}+b_{1}^{1}(x^{+},\xi)\partial_{\xi^{1}}+\ldots+b_{1}^{m}(x^{+},\xi)\partial_{\xi^{m}}\label{eq:basis_normed}\\
 & \vdots\nonumber \\
v_{m} & =\partial_{x^{m,+}}\nonumber \\
 & \hphantom{\phantom{=}}+a_{m}^{m+1}(x^{+},\xi)\partial_{x^{m+1,+}}+\ldots+a_{m}^{n}(x^{+},\xi)\partial_{x^{n,+}}\nonumber \\
 & \hphantom{\phantom{=}}+b_{m}^{1}(x^{+},\xi)\partial_{\xi^{1}}+\ldots+b_{m}^{m}(x^{+},\xi)\partial_{\xi^{m}}\nonumber 
\end{align}
for the distribution spanned by the vector fields (\ref{eq:basis_general}).
Up to a renumbering of the state variables, this can always be achieved
by suitable linear combinations.\footnote{Because of $\mathrm{rank}(\partial_{u}f)=m$, the
matrix formed by the coefficients $a_{k}^{i}$ of (\ref{eq:basis_general})
has full rank $m$.} Because of the $m\times m$ identity matrix in the
coefficients of the normalized basis (\ref{eq:basis_normed}), the
equations (\ref{eq:conditions_general}) with $i=1,\ldots,m$ imply
that all coefficients $c^{1},\ldots,c^{m}$ must be independent of
$\xi$. Consequently, the remaining equations of (\ref{eq:conditions_general})
with $i=m+1,\ldots,n$ simplify to the $(n-m)m$ algebraic equations
\[
c^{k}(x^{+})\partial_{\xi^{j}}a_{k}^{i}(x^{+},\xi)=0\, ,\, i=m+1,\ldots,n\,,\:j=1,\ldots,m\,.
\]

\bibliographystyle{plain}        
\bibliography{Bibliography}           




\end{document}